\numberwithin{equation}{section}
\newtheorem{thm}{Theorem}[section]
\newtheorem{lem}[thm]{Lemma}
\newtheorem{prop}[thm]{Proposition}
\newtheorem{cor}[thm]{Corollary}
\newtheorem{dfn}{Definition}[section]
\newtheorem{rmk}[thm]{Remark}
\newcommand{\R}{\mathbb{R}}
\newcommand{\Z}{\mathbb{Z}}
\title{On Bourgain's bound for short exponential sums and squarefree numbers}
\author{Ramon M. Nunes}
\address{Universit\'e Paris Sud, Laboratoire de math\'ematiques\\
Campus d'Orsay\\ 91405 Orsay Cedex\\ France}
\email{ramon.moreira@math.u-psud.fr}
\date{\today}
\begin{document}

\begin{abstract}
We use Bourgain's recent bound for short exponential sums to prove certain independence results related to the  distribution of squarefree numbers in arithmetic progressions.
\end{abstract}

\maketitle


\section{Introduction}

As usual, let

$$
e(x):=e^{2i\pi x},\text{ for $x\in \R$}.
$$
In a recent paper, Bourgain \cite{Bourgain} proved a non trivial bound for exponential sums such as
$$
\sum_{\substack{n\leq N\\ (n,q)=1}}e\left(\frac{a\overline{n}^2}{q}\right),
$$
where $q>1$ is an integer and ${\bar n}$ denotes the multiplicative inverse of $n\!\!\pmod q$, in the range $N\geq q^{\epsilon}$, for an arbitrarily small, but fixed, $\epsilon>0$. In his paper, Bourgain was interested in an application related to the size of fundamental solutions $\epsilon_D>1$ to the Pell equation
$$
t^2-Du^2=1.
$$
He followed the lead of Fouvry \cite{Fouvry}, who suggested that such an upperbound could help to improve the lower bounds for the following counting function
$$
S^f(x,\alpha):=\left|\Big\{(\epsilon_D,D); 2\leq D\leq x, D \text{ is not a square, and } \epsilon_D\leq D^{\frac{1}{2}+\alpha}\Big\}\right|,
$$
for small values of $\alpha$. In this article, we are interested in a different application of Bourgain's result (see Proposition \ref{B-prop} below) related to squarefree numbers in arithmetic progressions.

Let $X\geq 1$. let $a,q$ be integers, with $q\geq 1$. We let
\begin{equation}\label{E}
E(X,q,a):=\sum_{\substack{n\leq X\\ n\equiv a\!\!\!\pmod q}}\mu^2(n)-\dfrac{6}{\pi^2}\left(1-\frac{1}{q^2}\right)^{-1}\frac{X}{q}.
\end{equation}
For fixed $q$, the last term is known to be equivalent to 
$$
\frac{1}{\phi(q)}\sum_{\substack{n\leq X\\ (n,q)=1}}\mu^2(n)
$$
as $X\rightarrow \infty$. So that $E(X;q,a)$ can be seen as an error term of the distribution of squarefree numbers in arithmetic progressions.
One naturally has the trivial bound

\begin{equation}\label{trivial}
\Big|E(X,q,a)\Big|\leq \dfrac{X}{q} + 1 
\end{equation}

In a previous article, we \cite{RMN} proved

\begin{thm}\label{thmV2}
There exists an absolute constant $C>0$, such that, for every $\epsilon>0$, we have 

\begin{equation}\label{maineq}
\sum_{\substack{a \!\!\!\!\pmod q\\(a,q)=1}}\!\!\!\!\!\!{}^{*}\;E(X,q,a)^2 \sim C\prod_{p\mid q}\bigg(1+2p^{-1}\bigg)^{-1}X^{1/2}q^{1/2},
\end{equation}
for $X\rightarrow \infty$, uniformly for $q$ integer satisfying $X^{31/41+\epsilon}\leq q\leq X^{1-\epsilon}$.
\end{thm}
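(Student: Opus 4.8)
The plan is to open the second moment, apply the squarefree sieve $\mu^2(n)=\sum_{d^2\mid n}\mu(d)$ to reduce it to a double sum over squarefree $d_1,d_2\le\sqrt X$ coprime to $q$, read off the main term from the diagonal $d_1=d_2$, and bound the off-diagonal by exhibiting cancellation in exponential sums with $\overline{d}^2$ modulo $q$. First, for $(a,q)=1$ the sieve gives
$$
\sum_{\substack{n\le X\\ n\equiv a\,(q)}}\mu^2(n)=\sum_{\substack{d\le\sqrt X\\(d,q)=1}}\mu(d)\,\#\Big\{m\le \tfrac X{d^2}:\ m\equiv a\overline{d}^2\ (q)\Big\}=M^\sharp+R(a)+O\!\Big(\tfrac{\sqrt X}q\Big),
$$
where, by completing $\sum_d\mu(d)/d^2$ to all $d$, $M^\sharp=\tfrac6{\pi^2}\prod_{p\mid q}(1-p^{-2})^{-1}\tfrac Xq$ (the form of the second term in \eqref{E} intended here, equal to $\tfrac1{\phi(q)}\sum_{n\le X,(n,q)=1}\mu^2(n)$ up to $O(\sqrt X\,2^{\omega(q)}/\phi(q))$, as the displayed asymptotic requires), and $R(a)=\sum_{d\le\sqrt X,(d,q)=1}\mu(d)\rho_d(a)$ with $\rho_d(a)=\#\{m\le X/d^2:\ m\equiv a\overline{d}^2\,(q)\}-X/(d^2q)$, $|\rho_d(a)|\le1$. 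For $q\ge X^{31/41+\epsilon}$ the term $O(\sqrt X/q)$ is $o(1)$, so $E(X,q,a)=R(a)+O(\sqrt X/q)$ uniformly, and a Cauchy–Schwarz bound for $\sum^*_a|R(a)|$ reduces the theorem to $\sum_{\substack{a\bmod q\\(a,q)=1}}R(a)^2\sim C\prod_{p\mid q}(1+2p^{-1})^{-1}X^{1/2}q^{1/2}$.

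Next I would expand $R(a)^2$ and sum over the reduced residues. Since $a\mapsto a\overline{d}^2$ permutes them, the diagonal $d_1=d_2=d$ contributes $\sum_{d\le\sqrt X,(d,q)=1}\mu^2(d)\,V_d$ with
$$
V_d=\sum_{\substack{c\bmod q\\(c,q)=1}}\Big(\#\{m\le X/d^2:\ m\equiv c\ (q)\}-\tfrac X{d^2q}\Big)^2,
$$
a second moment of an interval count over reduced residues; one evaluates it exactly ($V_d\asymp\phi(q)$ for $d\le\sqrt{X/q}$, $V_d\asymp X\phi(q)/(d^2q)$ for $d>\sqrt{X/q}$, with explicit local densities), and both dyadic ranges of $d$ then sum to $\asymp X^{1/2}q^{1/2}$; assembling the Euler products from the two coprimality conditions and the local second moments yields the constant $C\prod_{p\mid q}(1+2p^{-1})^{-1}$ of \eqref{maineq}. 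For $d_1\neq d_2$ one notes that, $d_1,d_2$ being squarefree, their ratio is never a nontrivial square, so the Möbius weights hide no further diagonal; expanding each $\rho_{d_i}$ into the Fourier series of its sawtooth and using orthogonality of additive characters turns $\sum_{(a,q)=1}\rho_{d_1}(a)\rho_{d_2}(a)$ into
$$
\frac1{q^2}\sum_{h_1,h_2\neq0}\widehat G_{d_1}(h_1)\,\widehat G_{d_2}(h_2)\,c_q\!\big(h_1\overline{d_1}^2+h_2\overline{d_2}^2\big),
$$
with $c_q$ the Ramanujan sum and $\widehat G_d(h)=\sum_{m\le X/d^2}e(-hm/q)\ll\min(X/d^2,\|h/q\|^{-1})$. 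The part with $h_1d_2^2+h_2d_1^2\equiv0\,(q)$ is a rank‑one family of solutions whose leading piece recombines with $\sum_d\mu^2(d)V_d$; the rest, where $h_1d_2^2+h_2d_1^2\not\equiv0\,(q)$, is the genuine off-diagonal.

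That off-diagonal is the heart of the matter and the main obstacle. Here one bounds $|c_q(\lambda)|\le(\lambda,q)$, uses the decay of $\widehat G_{d_i}$ to localise the $h_i$, and applies Cauchy–Schwarz to uncouple $d_1$ and $d_2$; this reduces the estimate to incomplete exponential sums $\sum_{Y<d\le2Y,\,(d,q)=1}e(\beta\overline{d}^2/q)$ with $Y\le\sqrt X$ — sums over a short interval carrying $\overline{d}^2$ modulo $q$ in the phase. The Möbius weight is first removed by a Vaughan-type identity, producing Type~I sums whose inner sum is exactly of this form (with $\beta$ a new parameter) and Type~II sums, disposed of by a further Cauchy–Schwarz. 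For the basic sum one has the classical bound from completing modulo $q$ and applying Weil's estimate to $\sum_{d\bmod q}e\big((\beta\overline{d}^2+\gamma d)/q\big)$, and — decisively in this range — Proposition~\ref{B-prop}, Bourgain's bound for such sums over intervals of length $\ge q^{\eta}$, which supplies a power saving $q^{-\delta}$, $\delta=\delta(\eta)>0$, over the trivial bound. Propagating this saving back through the Cauchy–Schwarz steps, the $h_1,h_2$-sums and the dyadic decomposition of $d_1,d_2$, the off-diagonal total is $O(X^{A}q^{-\delta'})$ for an explicit $A$ and some $\delta'>0$; the hypothesis $q\ge X^{31/41+\epsilon}$ is precisely what forces this to be $o(X^{1/2}q^{1/2})$, completing the proof. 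The genuinely delicate points will be (i) organising this reduction so that the Cauchy–Schwarz and the Vaughan decomposition lose as little as possible — it is this optimisation that pins down the exponent $31/41$ — and (ii) keeping every error term (from completion, from the Möbius decomposition, and from the $O(1)$'s in the interval counts and in the evaluation of $V_d$) simultaneously below $X^{1/2}q^{1/2}$.
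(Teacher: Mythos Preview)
The paper does not prove Theorem~\ref{thmV2} at all: it is quoted verbatim from the author's earlier article \cite{RMN} (see the sentence ``In a previous article, we \cite{RMN} proved'' immediately preceding the statement). There is therefore no proof in this paper to compare your proposal against. What the present paper does prove is the correlation estimate Theorem~\ref{main-aa+1}, and its method---writing the bilinear sum as $S[\gamma](X,q)=\sum_{\ell\equiv s\,(q)}S(\ell,r)$, applying the Tsang-type Lemma~\ref{alatsang} to each $S(\ell,r)$, and then treating the resulting $B_2$--sums with Bourgain's input---is structurally different from the direct sieve/diagonal--off-diagonal decomposition you outline.

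Your sketch also contains a concrete error that would block the argument. You assert that Proposition~\ref{B-prop} ``supplies a power saving $q^{-\delta}$'' for the short sum $\sum_{Y<d\le 2Y,\,(d,q)=1}e(\beta\overline d^{\,2}/q)$. It does not. Look at the statement: one must trade off the size of the exceptional set $E_N$, which is $\asymp \beta(\log\tfrac1\beta)^C N$, against the bound \eqref{n-not-in-E}, which saves $N^{c(\beta\log N/\log q)^C}$. Taking $\beta$ a negative power of $N$ kills the second saving; taking $\beta$ only a negative power of $\log N$ leaves the exceptional set at size $N(\log N)^{-O(1)}$. The best one extracts is exactly what Corollary~\ref{Cor-Bourgain} records: a saving of $(\log q)^{-\delta}$, not $q^{-\delta}$. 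A log-power saving propagated through your Cauchy--Schwarz and Vaughan steps cannot manufacture a polynomial threshold like $q\ge X^{31/41+\epsilon}$; that exponent in \cite{RMN} necessarily comes from genuinely polynomial inputs (Weil-range completion and/or the $X^{2/3}$--type error in counting pairs of squarefree integers), not from Bourgain's estimate. Indeed, in the present paper Bourgain's bound is responsible only for the $(\log q)^{-\delta}$ term in \eqref{eq-aa+1}, while the exponent $7/9$ governing Corollary~1.4 comes from the $X^{5/3+\epsilon}/q$ term, i.e.\ from Lemma~\ref{alatsang}.
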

This theorem gives the asymptotic variance of the above mentioned distribution.\\
Inspired by an equivalent problem considered by Fouvry \textit{et al} \cite[Theorem 1.5.]{FGKM}, we studied how $E(X,q,a)$ correlates with $E(X,q,\gamma(a))$ for suitable choices of $\gamma: \Z/q\Z\rightarrow \Z/q\Z$. It is natural to choose $\gamma$ to be an affine linear map,  \textit{i.e.}

\begin{equation}\label{gamma}
\gamma_{r,s}(a)=ra+s, 
\end{equation}
where $r,s\in \Z$, $r\neq 0$ are fixed. Thus our objet of study is the following correlation sum
\begin{equation}\label{CorFunc}
C[\gamma_{r,s}](X,q):=\sum_{\substack{a \!\!\!\!\pmod q\\ a\neq 0,\gamma_{r,s}^{-1}(0)}}E(X,q,a)E(X,q,\gamma_{r,s}(a)),
\end{equation}
for $q$ prime.
In \cite{RMN}, we already considered the case $s=0$, and we found that correlation always existed for any non zero value of $r$.\\
In particular, there exists $C_r\neq 0$ such that for $X\rightarrow \infty$, $X^{31/41+\epsilon}\leq q\leq X^{1-\epsilon}$, one has

\begin{equation}\label{dep}
 C[\gamma_{r,0}](X,q)\sim C_r\left({\sum_{a\!\!\!\!\pmod q}\!\!\!\!\!}^{*}\;E(X,q,a)^2\right).
\end{equation}
Our main result is the following theorem which exhibits a certain independence between the functions $a\mapsto E(X,q,a)$ and $a\mapsto E(X,q,\gamma_{r,s}(a))$ considered as random variables on $\Z/q\Z$, which confirms our intuition on this question when $\gamma_{r,s}$ is not an homothety.

\begin{thm}\label{main-aa+1}
There exists an absolute $\delta>0$ such that

-for every $\epsilon>0$,

-for every $r$ integer, $r\neq 0$,\\
there exists $C_{\epsilon,r}$ such that one has the inequality

\begin{equation}\label{eq-aa+1}
\Big|C[\gamma_{r,s}](X,q)\Big|\leq C_{\epsilon,r}\left(q^{1+\epsilon}+X^{1/2}q^{1/2}(\log q)^{-\delta}+\dfrac{X^{5/3+\epsilon}}{q}+\left(\dfrac{X}{q}\right)^2\right)
\end{equation}
uniformly for $X\geq 2$, and $q$ prime $\leq X$ such that $q\nmid rs$.
\end{thm}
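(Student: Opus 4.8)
The plan is to convert $C[\gamma_{r,s}](X,q)$ into an exponential sum over residues modulo $q$ and to invoke Proposition \ref{B-prop} at the one place where it is decisive. First I would remove the two excluded residues $a=0$ and $a=\gamma_{r,s}^{-1}(0)=-s\overline{r}$, which are distinct because $q\nmid rs$ and each contribute $O((X/q+1)^2)$ by \eqref{trivial}; after restoring them one works with the complete sum over $a\bmod q$. Writing $E(X,q,a)=M(X,q,a)-\mathrm{MT}$ with $M(X,q,a)=\sum_{n\le X,\,n\equiv a\,(q)}\mu^2(n)$ and $\mathrm{MT}$ the second term of \eqref{E}, expanding the product and telescoping the cross terms via $\sum_aM(X,q,a)=\sum_{n\le X}\mu^2(n)=\tfrac{6}{\pi^2}X+O(X^{1/2})$ leaves
\[
C[\gamma_{r,s}](X,q)=\sum_{\substack{n_1,n_2\le X\\ n_2\equiv rn_1+s\,(q)}}\mu^2(n_1)\mu^2(n_2)\;-\;\frac{36}{\pi^4}\frac{X^2}{q}\;+\;O\!\left(\frac{X^{3/2}}{q}+\Big(\frac{X}{q}\Big)^2\right),
\]
the error being absorbed by the third and fourth terms of \eqref{eq-aa+1}. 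Equivalently this equals $\Sigma:=\sum_{n\le X}\mu^2(n)\,E(X,q,rn+s)$ up to the same errors, and I would estimate $\Sigma$.

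Next I would substitute $\mu^2(n)=\sum_{d^2\mid n}\mu(d)$ in each factor, write $n=d^2m$, and complete the inner sums over $m$ modulo $q$ by orthogonality. Since $q$ is prime, every divisor $d\le\sqrt X$ coprime to $q$ has $d^2$ invertible modulo $q$, and the completion produces phases $e(c\,\overline{d}^{\,2}/q)$, twisted by $e(-ts/q)$ with $t$ a new summation variable modulo $q$, multiplied by geometric sums $U(\beta;Y):=\sum_{m\le Y}e(\beta m)$. After collecting terms one arrives at an expression for $\Sigma$ roughly of the form $\tfrac1q\sum_{t\bmod q}e(-ts/q)A(t)B(t)$, in which $A(t)$ and $B(t)$ are sums over the two squarefree variables; one of these inner sums — of the shape $\sum_{d}\mu(d)\,e(c\,\overline{d}^{\,2}/q)$ against slowly varying weights — is precisely a Bourgain-type sum. (The few divisors with $q\mid d$, which occur only when $q\le\sqrt X$, and the residues $a$ with $q\mid a$ are dealt with separately; in the ranges of $q$ where they are relevant the terms $X^{5/3+\epsilon}/q$ and $(X/q)^2$ dominate, and cruder (though not entirely trivial) estimates of the type already used in \cite{RMN} suffice.)

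The heart of the proof is the treatment of a secondary main term of size $\asymp X^{1/2}q^{1/2}$: by \eqref{dep} such a term is genuinely present in $\Sigma$ when $s=0$, and the assertion of Theorem \ref{main-aa+1} is that it is cancelled — up to the factor $(\log q)^{-\delta}$ — as soon as $s\not\equiv 0\,(q)$. After isolating the resonant ranges of the variables that produce it, its coefficient is, following partial summation to strip the $\mu$-weight and the smooth factors, an exponential sum $\sum_{n\le N,\,(n,q)=1}e(a\overline{n}^{\,2}/q)$ of the exact shape treated by Bourgain, with $a$ coprime to $q$ precisely because $q\nmid rs$. Applying Proposition \ref{B-prop} yields a saving; since the relevant $N$ is only a small power of $q$, this saving takes the weak form $(\log q)^{-\delta}$ with $\delta>0$ absolute, and that is the source of the corresponding factor in \eqref{eq-aa+1}. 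The complementary, non-resonant part of the $t$-sum is handled by classical tools: completion together with P\'olya--Vinogradov / large-sieve estimates for the incomplete geometric sums, and a divisor bound for the number of near-diagonal solutions of the congruence $rd_1^2m_1\equiv d_2^2m_2-s\,(q)$; these contribute the terms $q^{1+\epsilon}$ and $X^{5/3+\epsilon}/q$, with all implied constants depending only on $r$ and $\epsilon$.

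The step I expect to be the main obstacle is the bookkeeping around the secondary main term: pinning down exactly which ranges of the divisor variables and which residues $t$ generate it, detaching that piece from an error of admissible size, and massaging the outcome into a sum to which Bourgain's theorem applies with all its hypotheses in force — a prime modulus (given), a genuine phase $a\overline{n}^{\,2}/q$ after the changes of variables, and a length in the admissible regime. Ensuring that the $r$-dependence stays out of $\delta$ and $\epsilon$, and that the error terms are uniform over $2\le q\le X$, is a further, more routine, layer of care.
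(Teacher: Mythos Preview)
Your opening reduction (completing the two missing residues, expanding $E\cdot E$, and isolating the bilinear sum $S[\gamma](X,q)=\sum_{n_2\equiv rn_1+s\,(q)}\mu^2(n_1)\mu^2(n_2)$) matches the paper exactly. After that point, however, the paper takes a different route from the one you outline.

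Rather than opening the congruence by orthogonality and working with frequencies $t\bmod q$, the paper \emph{parametrises by the shift} $\ell=n_2-rn_1$, so that $S[\gamma](X,q)=\sum_{\ell\equiv s\,(q)}S(\ell,r)$ with $S(\ell,r)=\sum_{n\in I(X,\ell,r)}\mu^2(n)\mu^2(rn+\ell)$. A Tsang--Blomer--Reuss type lemma (Lemma~\ref{alatsang}) evaluates each $S(\ell,r)$ with error $O_r(d_3(\ell)X^{2/3+\eps})$; summed over the $O_r(X/q)$ relevant $\ell$'s, this is precisely the source of the term $X^{5/3+\eps}/q$. The remaining object $\mathfrak{S}[\gamma](X,q)=\sum_{\ell\equiv s\,(q)}f(\ell,r)|I(X,\ell,r)|$ is then computed by writing $|I|$ as an integral, replacing the resulting floor functions by $B_1$, and integrating to $B_2$; the Fourier expansion of $B_2$ produces sums $\sum_{(d,q)=1}e(a\bar d^{\,2}/q)$ over $d$ up to about $(X/q)^{1/2}$, to which Corollary~\ref{Cor-Bourgain} is applied (Lemma~\ref{exp}, Proposition~\ref{propsum}). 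This is where the saving $(\log q)^{-\delta}$ on the secondary main term $X^{1/2}q^{1/2}$ comes from, exactly as you anticipated.

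So the decisive idea --- that the would-be secondary main term is governed by a short sum $\sum_{n\le N}e(a\bar n^{\,2}/q)$ with $(a,q)=1$ because $q\nmid rs$, and that Bourgain's bound supplies the $(\log q)^{-\delta}$ --- is the same in both. The packaging differs: the paper's $\ell$-parametrisation plus the pairs-of-squarefree-numbers lemma gives the $X^{5/3+\eps}/q$ term in one clean stroke and leaves an explicit $B_2$-sum for Bourgain, whereas your additive-character route would have to manufacture both the error $X^{5/3+\eps}/q$ and the Bourgain sum out of the frequency decomposition. The step you flag as the main obstacle --- isolating the resonant piece and matching the error budget --- is indeed where the work lies in your plan; in the paper this bookkeeping is absorbed by Lemma~\ref{alatsang} and the $B_2$ calculus, which you may find a cleaner substitute for the P\'olya--Vinogradov/large-sieve patchwork you propose. (Note also that in your orthogonality set-up the natural phase is $e(td^2 m/q)$ rather than $e(c\bar d^{\,2}/q)$; the inverse-square phase in the paper appears because one first solves $d^2m\equiv a\pmod q$ for $m$ and then counts via $B_1$, not because one Fourier-expands the congruence directly.)
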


A consequence of Theorems \ref{thmV2} and \ref{main-aa+1} (not necessarily with the same $\epsilon$) is the following

\begin{cor}
For every $\epsilon>0$ and $r\neq 0$, there exists a function $\Phi_{\epsilon,r}:\R^{+}\rightarrow\R^{+}$, tending to zero at infinity, such that for every $X>1$, for every integer $s$ and for any prime $q$ such that $q\nmid rs$ and $X^{7/9+\epsilon}\leq q\leq X^{1-\epsilon}$, one has the inequality

\begin{equation}\label{indep}
\Big|C[\gamma_{r,s}](X,q)\Big|\leq \Phi_{\epsilon,r}(X)\left({\sum_{a\!\!\!\!\pmod q}\!\!\!\!\!}^{*}\;E(X,q,a)^2\right).
\end{equation}
\end{cor}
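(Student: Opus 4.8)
The plan is to obtain the Corollary as a direct quantitative consequence of Theorems~\ref{thmV2} and~\ref{main-aa+1}, by dividing the upper bound \eqref{eq-aa+1} for $|C[\gamma_{r,s}](X,q)|$ by the size of the variance $V(X,q):=\sum^{*}_{a}E(X,q,a)^2$ (the sum over $a\pmod q$ with $(a,q)=1$) given by \eqref{maineq}. First I would fix $\epsilon>0$ and $r\neq 0$ as in the statement. Since $q$ is prime the Euler product in \eqref{maineq} equals $(1+2/q)^{-1}\to 1$, and since $7/9-31/41=8/369>0$ one may choose $\epsilon_2=\epsilon_2(\epsilon)\in(0,\epsilon]$ with $31/41+\epsilon_2<7/9+\epsilon$; then Theorem~\ref{thmV2}, applied with this $\epsilon_2$, provides $X_0=X_0(\epsilon)\geq 2$ and $c_0=c_0(\epsilon)>0$ such that
\[
V(X,q)\;\geq\;c_0\,X^{1/2}q^{1/2}\qquad\text{whenever }X\geq X_0\ \text{and}\ q\ \text{is prime with}\ X^{7/9+\epsilon}\leq q\leq X^{1-\epsilon}.
\]

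Next I would apply Theorem~\ref{main-aa+1} with a parameter $\epsilon_1=\epsilon_1(\epsilon)\in(0,\epsilon/2)$, divide the four terms of \eqref{eq-aa+1} by the lower bound for $V(X,q)$ just obtained, and simplify: for $X\geq X_0$,
\[
\frac{\big|C[\gamma_{r,s}](X,q)\big|}{V(X,q)}\;\ll_{\epsilon,r}\;\frac{q^{1/2+\epsilon_1}}{X^{1/2}}+(\log q)^{-\delta}+\frac{X^{7/6+\epsilon_1}}{q^{3/2}}+\frac{X^{3/2}}{q^{5/2}},
\]
and then, bounding each term (using $q\leq X^{1-\epsilon}$ for the first, $q\geq X^{7/9}$ with $\tfrac32\cdot\tfrac79=\tfrac76$ for the third, and $q\geq X^{7/9}$ with $\tfrac52\cdot\tfrac79-\tfrac32=\tfrac49$ for the fourth),
\[
\frac{\big|C[\gamma_{r,s}](X,q)\big|}{V(X,q)}\;\ll_{\epsilon,r}\;X^{\epsilon_1-\epsilon/2}+\Big(\tfrac79\log X\Big)^{-\delta}+X^{\epsilon_1-3\epsilon/2}+X^{-4/9}.
\]
Because $\epsilon_1<\epsilon/2$, all four quantities on the right tend to $0$ as $X\to\infty$, the slowest being $(\log X)^{-\delta}$. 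I would then define $\Phi_{\epsilon,r}(X)$ to be this last right-hand side (with the implied constant made explicit) for $X\geq X_0$, and $\Phi_{\epsilon,r}(X):=1$ for $1<X<X_0$; this is a positive function on $\R^{+}$ tending to $0$ at infinity which, by construction, satisfies \eqref{indep} for every $X\geq X_0$.

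It remains to verify \eqref{indep} for $1<X<X_0$, where $\Phi=1$ is admissible because in fact $|C[\gamma_{r,s}](X,q)|\leq V(X,q)$ holds for \emph{every} admissible pair $(X,q)$. This is immediate from Cauchy--Schwarz: as $r\not\equiv 0\pmod q$ the map $\gamma_{r,s}$ is a bijection of $\Z/q\Z$, and the index set $A=\{a:\ a\not\equiv 0,\ a\not\equiv\gamma_{r,s}^{-1}(0)\ (\mathrm{mod}\ q)\}$ of \eqref{CorFunc}, together with its image $\gamma_{r,s}(A)=(\Z/q\Z)\setminus\{0,s\}$, both consist of residues coprime to $q$ (the hypothesis $q\nmid s$ makes $\gamma_{r,s}^{-1}(0)\not\equiv 0$ and $s\not\equiv 0$); hence
\[
\big|C[\gamma_{r,s}](X,q)\big|\;\leq\;\Big(\sum_{a\in A}E(X,q,a)^2\Big)^{1/2}\Big(\sum_{a\in A}E(X,q,\gamma_{r,s}(a))^2\Big)^{1/2}\;\leq\;V(X,q).
\]

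I do not expect a real obstacle; the only thing to get right is the bookkeeping of exponents. The key point is that the lower threshold $q\geq X^{7/9+\epsilon}$ is precisely what drives the two ``off-diagonal'' contributions $X^{5/3+\epsilon}/q$ and $(X/q)^2$ of \eqref{eq-aa+1} below the variance $X^{1/2}q^{1/2}$ --- indeed $X^{5/3}/q\leq X^{1/2}q^{1/2}$ is equivalent to $q\geq X^{7/9}$ --- while the upper threshold $q\leq X^{1-\epsilon}$ takes care of the term $q^{1+\epsilon}$ and is also what Theorem~\ref{thmV2} requires; the inequality $7/9>31/41$ guarantees that the window in which both theorems are available is nonempty and leaves room to fit the auxiliary parameters $\epsilon_1,\epsilon_2$.
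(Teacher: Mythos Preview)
Your proof is correct and follows exactly the approach the paper indicates (the paper gives no detailed proof, merely noting that the Corollary is ``a consequence of Theorems~\ref{thmV2} and~\ref{main-aa+1} (not necessarily with the same $\epsilon$)''). Your exponent bookkeeping is accurate, and your Cauchy--Schwarz treatment of the range $1<X<X_0$ is a clean way to dispose of small $X$ that the paper does not bother to spell out.
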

Inequality \eqref{indep} shows a behavior different from \eqref{dep} corresponding to $s=0$. In other words, it indicates some independence of the random variables.

Here, as in \cite{RMN}, we give results that are true for a general $r\neq 0$, but in order to simplify the presentation, we give proofs that are only complete when $r$ is squarefree (the case where  $\mu^2(r)=0$ implies a more difficult definition of the $\kappa$ function in \eqref{kappa}).

\section{Notation}

We define the Bernoulli polynomials $B_k(x)$ for $k\geq 1$, on $[0,1)$, in the following recursive way

$$B_1(x):=x-1/2$$
$$\frac{d}{dx}B_{k+1}(x)=B_{k}(x),$$
$$\int_0^1B_k(x)dx=0.$$
We can extend these functions to periodic functions defined in the whole real line by posing
$$
B_k(x):=B_k(\{x\}).
$$
We further notice that $B_1(x)$ satisfy the following relation
\begin{equation}\label{floor-B1}
\lfloor x\rfloor=x-\frac{1}{2}-B_1(x)
\end{equation}
and $B_2(x)$ satisfies
\begin{equation}
B_2(x)=\frac{x^2}{2}-\frac{x}{2}+\frac{1}{12}\text{ for }0\leq x\leq 1.
\end{equation}
In the course of the proof we will make repetitive use of the following multiplicative function
\begin{equation}\label{def-h}
h(d)=\mu^2(d)\prod_{p\mid d}\left(1-2p^{-2}\right)^{-1}.
\end{equation}
We also define here the closely related product
\begin{equation}\label{C2}
 C_2=\prod_p\left(1-\frac{2}{p^2}\right).
\end{equation}

We denote, as usual, by $d(n)$, $d_3(n)$ the classical binary and ternary divisor functions, respectively. We write $\omega(n)$ for the number of primes dividing $n$.
We write $n\sim N$ as an alternative to $N< n\leq 2N$. If $I\subset R$ is an interval, $|I|$ denotes its length.
We use indistinguishably the notations $f=O(g)$ and $f\ll g$ when there is an absolute constant $C$ such that
$$
|f|\leq Cg,
$$
on a certain domain of the variables which will be clear by the context, and the the same for the symbols $O_{\epsilon}$, $O_{r}$, $O_{\epsilon,r}$ and $\ll_{\epsilon}$, $\ll_{r}$, $\ll_{\epsilon,r}$, but with constants that may depend on the subindexed variables.

\section{Initial Steps}\label{initial}
Let $X>1$. Let $\gamma=\gamma_{r,s}$ be given by \eqref{gamma} and let $q$ be a prime number $\leq X$ such that $q\nmid rs$.\\ 
We start by completing the sum defining $C[\gamma](X,q)$ (see \eqref{CorFunc}) and we bound trivially the exceding terms. We have, in view of \eqref{trivial}, that
\begin{equation}\label{C+err}
C[\gamma](X,q)=\sum_{a=0}^{q-1}E(X,q,a)E(X,q,\gamma(a))+O\left(\left(\dfrac{X}{q}\right)^2\right),
\end{equation}
%
In what follows, for simplification, we shall write
\begin{equation}\label{cq}
 C(q)=\dfrac{6}{\pi^2}\left(1-\frac{1}{q^2}\right)^{-1}.
\end{equation}
As we develop the first sum on the right-hand side of \eqref{C+err}, we obtain

\begin{equation}\label{developC}
C[\gamma](X,q)=S[\gamma](X,q)-2C(q)\frac{X}{q}\sum_{n\leq X}\mu^2(n)+C(q)^2\dfrac{X^2}{q}+O\left(\frac{X^2}{q^2}\right),
\end{equation}
where $S[\gamma](X,q)$ is defined by the double sum
\begin{equation}\label{def-S}
S[\gamma](X,q)=\underset{\substack{n_1,n_2\leq X\\n_2\equiv \gamma(n_1)\!\!\!\pmod q}}{\sum\sum}\mu^2(n_1)\mu^2(n_2).
\end{equation}

We point out that $S[\gamma](X,q)$ is the only difficult term appearing in equation \eqref{developC}, since we have the well-known formula

\begin{align}\label{SF-easy}
\sum_{n\leq X}\mu^2(n) &= \dfrac{6}{\pi^2}X +O(\sqrt{X})\notag\\
&=C(q)X +O\left(\dfrac{X}{q^2}+\sqrt{X}\right),
\end{align}
uniformly for $1\leq q \leq X.$ An asymptotic expansion of $S[\gamma](X,q)$ will be given in Proposition \ref{Sfin}.

\section{Useful lemmata}

We start with a lemma concerning the multiplicative function $h(d)$ which is a simple consequence of \cite[lemma 4.2]{RMN}

\begin{lem}\label{conv}
Let $h(d)$ be as in \eqref{def-h} and let $\beta$ be the multiplicative function defined by
$$
h(d)=\sum_{mn=d}\beta(m)\text{, }d\geq 1.
$$
Then $\beta(m)$ satisfies
\begin{align}
\sum_{m\geq M}\frac{\beta(m)}{m}&\ll \frac{(\log 2M)^2}{M}\label{conv2},\\
\sum_{m\leq M}\beta(m)&\ll M\label{conv3},
\end{align}
uniformly for every $M\geq 	1$.
\end{lem}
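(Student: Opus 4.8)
The plan is to derive the bounds \eqref{conv2} and \eqref{conv3} from the cited result \cite[Lemma 4.2]{RMN}, working directly with the Dirichlet series of $\beta$. First I would write down what $\beta$ is: since $h=\beta * \mathbf{1}$ (Dirichlet convolution with the constant function $1$), we have $\beta = h * \mu$, so $\beta$ is multiplicative and on prime powers $\beta(p) = h(p) - 1 = (1-2p^{-2})^{-1} - 1 = 2p^{-2}(1-2p^{-2})^{-1}$, while $\beta(p^k) = h(p^k) - h(p^{k-1})$ for $k \geq 2$; since $h$ is supported on squarefree numbers, $\beta(p^2) = -h(p) = -(1-2p^{-2})^{-1}$ and $\beta(p^k)=0$ for $k\geq 3$. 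In particular $\beta(p) \ll p^{-2}$ and $\beta(p^2)\ll 1$, so $|\beta(m)|$ is supported on $m$ of the form (squarefree)$\times$(square) with a strong saving on the squarefree part. This already makes both estimates plausible: the series $\sum_m |\beta(m)|/m^s$ converges for $\Re s > -1/2$ or so, reflecting the fact that $\sum \beta(m)$ is essentially a constant plus a rapidly convergent tail.

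The cleanest route is via Rankin's trick combined with a direct comparison. For \eqref{conv3}, I would split $m = m_1 m_2$ with $m_1$ squarefree, $m_2$ a full square (or rather isolate the squarefull part $m_2$ of $m$), so that $|\beta(m)| \leq |\beta_{\mathrm{sf}}(m_1)| \cdot |\beta_{\mathrm{sq}}(m_2)|$ where $\sum_{m_1} |\beta_{\mathrm{sf}}(m_1)| m_1^{-1/2+\eta} < \infty$ for small $\eta>0$ and $\sum_{m_2} |\beta_{\mathrm{sq}}(m_2)| m_2^{-1/2-\eta} < \infty$ (since squarefull numbers up to $Y$ number $O(\sqrt Y)$ and $|\beta_{\mathrm{sq}}|$ is bounded). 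Then
\[
\sum_{m\leq M} |\beta(m)| \leq \sum_{m_2 \leq M} |\beta_{\mathrm{sq}}(m_2)| \sum_{m_1 \leq M/m_2} |\beta_{\mathrm{sf}}(m_1)| \ll \sum_{m_2\leq M}|\beta_{\mathrm{sq}}(m_2)| \cdot (M/m_2)^{1/2+\eta}\cdot\text{(const)} \ll M^{1/2+\eta}\sum_{m_2} \frac{|\beta_{\mathrm{sq}}(m_2)|}{m_2^{1/2+\eta}} \ll M,
\]
which gives \eqref{conv3} (indeed with room to spare). For \eqref{conv2}, partial summation from \eqref{conv3} gives $\sum_{m\geq M}|\beta(m)|/m \ll (\log M)/M$ or better; the slightly weaker $(\log 2M)^2/M$ stated is then immediate and handles the edge case $M$ small (where the $\log 2M$ factor keeps things $\gg 1$). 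Alternatively, and perhaps more in the spirit of "a simple consequence of \cite[Lemma 4.2]{RMN}", that cited lemma presumably already records a sharp estimate for $\sum_{m\leq M}|\beta(m)|$ or for a related convolution, and one just needs to unwind the definition of $\beta$ here to match it, then apply partial summation.

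The main obstacle, such as it is, is bookkeeping rather than depth: one must carefully separate the squarefree and squarefull parts of $m$ and verify that the relevant one-variable Dirichlet series converge in the claimed half-planes — in particular that $\sum_p \beta(p) z^p$-type contributions are controlled by $\sum_p p^{-2}$, which is harmless. There is no genuine difficulty; the point of the lemma is simply to package the analytic input about $h$ into the two forms \eqref{conv2}, \eqref{conv3} that will be convenient later when $S[\gamma](X,q)$ is expanded. If \cite[Lemma 4.2]{RMN} is quoted verbatim, the proof here may be a two-line reduction; if not, the Rankin-trick argument above fills in the details.
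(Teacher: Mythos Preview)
Your approach is correct and close in spirit to the paper's: both rest on the decomposition $m=ab^2$ with $a,b$ squarefree and coprime, together with the observation that $\beta$ is tiny on the squarefree factor. The paper is slightly more direct: it quotes from \cite[Lemma~4.2]{RMN} the pointwise bound $\beta(ab^2)\ll d(a)/a^2$, from which $\beta(m)\ll 1$ gives \eqref{conv3} in one line, and the tail $\sum_{m\geq M}|\beta(m)|/m\ll\sum_{ab^2\geq M} d(a)/(a^3b^2)\ll\sum_{n\geq M} d_3(n)/n^2\ll(\log 2M)^2/M$ gives \eqref{conv2} without any detour through partial summation. Your Rankin-type argument recovers the same structure and in fact yields a sharper $\sum_{m\leq M}|\beta(m)|\ll M^{1/2+\eta}$; the only slip is the phrase ``partial summation from \eqref{conv3}'': as literally stated, \eqref{conv3} is the bound $\ll M$, which is too weak for the tail integral $\int_M^\infty t/t^2\,dt$ to converge. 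What you need (and what you actually established) is partial summation from the stronger $M^{1/2+\eta}$ estimate, which then gives $\sum_{m\geq M}|\beta(m)|/m\ll M^{-1/2+\eta}$, comfortably inside the claimed $(\log 2M)^2/M$.
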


\begin{proof}
By \cite[lemma 4.2]{RMN}, we know that $\beta(m)$ is supported on cubefree numbers and, if we write $m=ab^2$ with $a$, $b$ squarefree and relatively prime, then
$$
\beta(m)\ll \dfrac{d(a)}{a^2}.
$$
In particular, $\beta(m)\ll 1$, which is sufficient to prove \eqref{conv3}. In order to prove \eqref{conv2}, we notice that
\begin{align*}
\sum_{m\geq M}\frac{\beta(m)}{m}&\ll \underset{ab^2\geq M}{\sum\sum}\dfrac{d(a)}{a^3b^2}\\
&\ll \sum_{n\geq M}\dfrac{d_3(n)}{n^2} \ll \dfrac{(\log 2M)^2}{M}.
\end{align*}
  
\end{proof}

The next proposition is the main result from \cite{Bourgain}, which is crucial to our proof. 
\begin{prop}\label{B-prop}{(see \cite[Proposition 4]{Bourgain})}
There exist constants $c,C,C'$ such that for every $N, q\geq 2$ and $\frac{1}{\log 2N}<\beta<\frac{1}{10}$, there exist a subset $E_N\subset \{1,2,\ldots,N\}$(independent of $q$) satisfying	

\begin{equation}\label{E<}
|E_N|\leq C' \beta\left(\log \frac{1}{\beta}\right)^CN
\end{equation}
and such that, uniformly for $(a,q)=1$, one has
\begin{equation}\label{n-not-in-E}
\left|\sum_{n\leq N, n\not\in E_N, (n,q)=1}e\left(\frac{a{\bar n}^2}{q}\right)\right|\leq C' (\log 2N)^{C}N^{1-c\left(\beta\frac{\log N}{\log q}\right)^{C}}.
\end{equation}

\end{prop}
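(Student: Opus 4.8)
The plan is to reproduce the architecture of Bourgain's argument, which rests on three ingredients: a multiplicative \emph{amplification} converting the linear sum into a bilinear one, a sum--product estimate in $\Z/q\Z$ applied to that bilinear sum, and a combinatorial estimate for the size of the exceptional set. Write $S(a)=\sum_{n\le N,\,(n,q)=1}e(a\bar n^{2}/q)$ and put $M:=\max(2,\lfloor N^{\beta}\rfloor)$, so that $\log M\asymp\beta\log N$. The elementary observation driving everything is that $\overline{mn'}^{\,2}\equiv\bar m^{2}\bar n'^{\,2}\pmod q$ when $(mn',q)=1$, so a factorisation $n=mn'$ splits the phase $e(a\bar n^{2}/q)$ as $e(a\bar m^{2}\bar n'^{\,2}/q)$, in which $\bar m^{2}$ and $\bar n'^{\,2}$ appear as \emph{decoupled} multiplicative variables. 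Exploiting this for the $n\le N$ that possess rich enough multiplicative structure at scale $M$ reduces the problem of bounding $S(a)$, after discarding a set $E_N$ of ``structurally deficient'' $n$ (defined purely through the integer $n$ and the scale $M$, hence independent of $q$), to bounding a bilinear exponential sum
\[
\Sigma\;=\;\sum_{x\in A}\sum_{y\in B}r_{A}(x)\,r_{B}(y)\,e\!\left(\frac{a\,xy}{q}\right),
\]
with $A,B\subset(\Z/q\Z)^{\ast}$ of sizes $\gg M^{1-\eps}$ and $\gg (N/M)^{1-\eps}$ (capped at $q^{1-\eps}$) built from $\{\bar m^{2}:m\sim M\}$ and $\{\bar n'^{\,2}:n'\sim N/M\}$, and $r_{A},r_{B}$ bounded representation weights; the intermediate Cauchy--Schwarz needed to strip these weights costs only polynomial factors.

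The analytic input is then the Bourgain--Glibichuk--Konyagin sum--product / bilinear exponential sum estimate: for $A,B\subset\Z/q\Z$ with $\min(|A|,|B|)\ge q^{\kappa}$, neither concentrated on a coset of a proper ideal, and $(a,q)=1$, one has $\big|\sum_{x\in A,\,y\in B}e(axy/q)\big|\ll|A|\,|B|\,q^{-\rho(\kappa)}$ with $\rho(\kappa)\gg\kappa^{C}$ for an absolute $C$. Since $\log|A|\asymp\log M\asymp\beta\log N$, the relevant exponent is $\kappa\asymp\beta\log N/\log q$, and, after propagating this gain through the (polynomially lossy) amplification and stripping the combinatorial weights, one obtains an exponent saving for $|S(a)|$ of the shape $N^{\,1-c(\beta\log N/\log q)^{C}}$, up to the harmless factor $(\log 2N)^{C}$ — this is exactly \eqref{n-not-in-E}. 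Meanwhile the exceptional set is governed by the distribution of divisors of a typical integer at scale $M=N^{\beta}$ (Erd\H{o}s--Ford--Tenenbaum-type estimates): calibrating the ``deficiency'' threshold in terms of $\beta$, the density of deficient $n$ is $\ll\beta(\log\frac1\beta)^{C}$, which gives \eqref{E<}.

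I expect two points to carry essentially all the difficulty. The first is the calibration of $E_N$: it must simultaneously be small enough for \eqref{E<} and leave on its complement enough multiplicative structure at scale $M=N^{\beta}$ to run the amplification without fatal loss, and obtaining the right dependence on $\beta$ all the way down to $\beta\asymp 1/\log 2N$ is delicate. The second, and more serious, is the uniformity in $q$ over \emph{all} $q\ge 2$: the Bourgain--Glibichuk--Konyagin bilinear bound is cleanest for $q$ prime, and for composite $q$ one must rule out concentration of $A$ or $B$ on cosets of ideals $d\,\Z/q\Z$ with $d\mid q$ — equivalently, control those $m,n'$ for which the dilation introduces common factors with $q$ at intermediate scales — which is handled by a multiscale descent, reducing modulo a divisor $q_{1}$ of $q$ and iterating, the bounded depth of the recursion being absorbed into $C$. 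By contrast, classical tools are useless in this regime: complete-sum bounds (Kloosterman--Weil) and Karatsuba's bilinear-forms method both require $N$ to be a fixed positive power of $q$, whereas the whole point of Proposition \ref{B-prop} is to reach $N$ as small as $q^{o(1)}$.
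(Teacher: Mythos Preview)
The paper does not prove Proposition~\ref{B-prop}: it is quoted verbatim from Bourgain's paper (the only accompanying text is a remark translating Bourgain's $\underset{\sim}{<}$ notation into the explicit constants $c,C,C'$), and is used thereafter as a black box to derive Corollary~\ref{Cor-Bourgain}. There is therefore no proof in the present paper to compare your proposal against.

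As a high-level sketch of Bourgain's own argument your outline is reasonable --- the three pillars (multiplicative amplification via a factorisation $n=mn'$ at scale $M=N^{\beta}$, the Bourgain--Glibichuk--Konyagin bilinear exponential-sum bound over $\Z/q\Z$, and an Erd\H{o}s--Ford--Tenenbaum-type estimate to control the exceptional set of integers lacking a divisor near $M$) are indeed the ingredients, and your identification of $\kappa\asymp\beta\log N/\log q$ as the governing parameter is correct. That said, what you have written is a plan, not a proof: you explicitly flag the two hard points (the calibration of $E_N$ uniformly down to $\beta\asymp 1/\log 2N$, and the multiscale descent for composite $q$) without carrying them out, and the passage from the bilinear saving $q^{-\rho(\kappa)}$ back to a saving of the precise shape $N^{1-c(\beta\log N/\log q)^{C}}$ requires bookkeeping you have not done. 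If the intent is merely to cite the result, as the paper does, that is fine; if the intent is to supply an independent proof, substantial work remains.
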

\begin{rmk}
In the statement of his result, Bourgain uses the symbol $\underset{\sim}{<}$, where one writes $f(x) \underset{\sim}{<} g(x)$ if there is some $C>0$ such that
$$
f(x)\leq Cg(Cx)+C.
$$
In our case, it is easy to see that his result implies Proposition \ref{B-prop}.
\end{rmk}

In fact we specifically need the following corollary

\begin{cor}\label{Cor-Bourgain}
There exists $\delta>0$ such that for every $\epsilon>0$, we have
$$
\sum_{n\leq N, (n,q)=1}e\left(\frac{a{\bar n}^2}{q}\right)\ll_{\epsilon} N(\log q)^{-\delta},
$$
uniformly for $N,q\geq 2$ and $N\geq q^{\epsilon}$.
\end{cor}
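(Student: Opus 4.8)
The plan is to deduce Corollary \ref{Cor-Bourgain} from Proposition \ref{B-prop} by choosing the parameter $\beta$ appropriately and handling the exceptional set $E_N$ trivially. First I would split the sum according to whether $n \in E_N$ or not:
$$
\sum_{\substack{n \le N\\ (n,q)=1}} e\!\left(\frac{a\bar n^2}{q}\right) = \sum_{\substack{n \le N,\, n \notin E_N\\ (n,q)=1}} e\!\left(\frac{a\bar n^2}{q}\right) + O(|E_N|).
$$
The first term is controlled by \eqref{n-not-in-E} and the second by \eqref{E<}, so everything reduces to choosing $\beta = \beta(N,q,\epsilon)$ so that both $\beta(\log\frac1\beta)^C$ and $(\log 2N)^C N^{-c(\beta \log N/\log q)^C}$ are $\ll_\epsilon (\log q)^{-\delta}$ for a suitable absolute $\delta > 0$.

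The key point is that the hypothesis $N \ge q^\epsilon$ gives $\log N \ge \epsilon \log q$, so $\beta \log N / \log q \ge \epsilon \beta$. I would take $\beta$ to be a small negative power of $\log q$, say $\beta = (\log q)^{-\eta}$ for a small absolute constant $\eta > 0$ to be fixed (one checks $\frac{1}{\log 2N} < \beta < \frac1{10}$ holds once $q$ is large, using $N \le q^{1/\epsilon}$ is \emph{not} assumed, but $N$ can be taken $\ge q^\epsilon$ and the case of very large $N$ relative to $q$ only makes the bound easier; for bounded $q$ the statement is trivial after absorbing constants into $\ll_\epsilon$). With this choice the exceptional-set contribution is $O(\beta (\log\frac1\beta)^C N) = O((\log q)^{-\eta}(\log\log q)^C N) \ll (\log q)^{-\eta/2} N$, which is of the desired shape with room to spare. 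For the main term, $N^{-c(\epsilon\beta)^C} = N^{-c\epsilon^C (\log q)^{-\eta C}}$; since $N \ge q^\epsilon$ we have $N^{-u} \le q^{-\epsilon u} = \exp(-\epsilon u \log q)$, so with $u = c\epsilon^C (\log q)^{-\eta C}$ this is $\exp(-c\epsilon^{C+1}(\log q)^{1-\eta C})$, which decays faster than any power of $\log q$ provided $\eta C < 1$, i.e. $\eta < 1/C$. The polynomial factor $(\log 2N)^C$ is harmless since it too is dominated: even if $N$ is large, $(\log 2N)^C N^{-u}$ is maximized for $N$ not too large and in any case $(\log 2N)^C \le N^{u/2}$ once $N$ exceeds a threshold depending only on $\epsilon$, so the product is $\ll_\epsilon N^{-u/2} \cdot N \ll_\epsilon N (\log q)^{-\delta}$ for any $\delta$.

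Assembling the two bounds, one gets $\sum_{n \le N,(n,q)=1} e(a\bar n^2/q) \ll_\epsilon N\big((\log q)^{-\eta/2} + \exp(-c'(\log q)^{1-\eta C})\big) \ll_\epsilon N (\log q)^{-\delta}$ with, say, $\delta = \eta/2$ for a fixed admissible $\eta$ such as $\eta = \frac{1}{2C}$; crucially $\delta$ depends only on the absolute constants $c,C,C'$ of Proposition \ref{B-prop} and not on $\epsilon$, which is exactly the claim. I do not anticipate a serious obstacle here — this is purely a bookkeeping argument — but the one point requiring a little care is verifying the admissibility condition $\frac{1}{\log 2N} < \beta < \frac1{10}$ on $\beta$ uniformly in the stated range, and cleanly disposing of the ranges where $q$ or $N$ is bounded (where the trivial bound $\ll N$ already suffices after enlarging the implied constant). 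One should also make sure the comparison $(\log 2N)^C \le N^{u/2}$ is justified uniformly: since $u \gg_\epsilon (\log q)^{-\eta C} \gg_\epsilon (\log N)^{-\eta C}$ (using $\log q \le (1/\epsilon)\log N$), the inequality $(\log 2N)^C \le \exp\big(\tfrac{u}{2}\log N\big)$ follows once $\log N$ is large in terms of $\epsilon$, and small $N$ is again trivial.
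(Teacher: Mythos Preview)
Your proposal is correct and follows essentially the same approach as the paper: split the sum into $n\in E_N$ and $n\notin E_N$, apply the two bounds from Proposition~\ref{B-prop}, and choose $\beta$ as a small negative power of a logarithm so that both contributions are $\ll_\epsilon N(\log q)^{-\delta}$. The only cosmetic difference is that the paper takes $\beta=(\log N)^{-\delta_1}$ with $\delta_1=\min(1/2,1/(2C))$ whereas you take $\beta=(\log q)^{-\eta}$ with $\eta=1/(2C)$; since $\epsilon\log q\le\log N$ in the stated range these choices are interchangeable, and both yield an absolute $\delta$ independent of $\epsilon$.
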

\begin{rmk}

More generally, we may consider the sum
$$
\Sigma(I,q)=\sum_{\substack{n\in I\\ (n,q)=1}}e\left(\frac{a{\bar n}^2}{q}\right)
$$
where $I$ is a general interval of length $N \pmod{q}$. By the completion of exponential sums and Weil's bound for such sums, we know that
\begin{equation}\label{Weil}
\Sigma(I,q)\ll q^{1/2}\log q,
\end{equation}
for $q$ prime. Hence, \eqref{Weil} is non trivial as soon as $N\geq q^{\epsilon}$ (for any $\epsilon>1/2$). Obvioulsy, Bourgain's result is much stronger than \eqref{Weil}, but it only applies to intervals containing $0$, roughly speaking.
\end{rmk}

\begin{proof}{(of Corollary \ref{Cor-Bourgain})}
We use Proposition \ref{B-prop} and make the choice $\beta=(\log N)^{-\delta_1}$, where $\delta_1=\min\left(\frac{1}{2},\frac{1}{2C}\right)$. We add together inequalities \eqref{E<} and \eqref{n-not-in-E} to obtain
$$
\sum_{n\leq N, (n,q)=1}e\left(\frac{a{\bar n}^2}{q}\right)\ll N\frac{(\log\log N)^{C}}{(\log N)^{-\delta_1}} + N\frac{(\log N)^C}{\exp(c\epsilon^C(\log N)^{1/2})}.
$$
The corollary now follows by taking, for example, $\delta=\delta_1/2$. 
\end{proof}

\begin{rmk}
Corollary \ref{Cor-Bourgain} will be essential to the proof of Proposition \ref{Sfin}, in which we use it for values of $N$ which are roughly of size $\sqrt{\frac{X}{q}}$. Since we want to take $q$ as large as $X^{1-\epsilon}$, it is very important that Bourgain's result holds for $N$ as small as $q^{\epsilon}.$
\end{rmk}

The next lemma is very similar in essence to many others to be found in literature, for example \cite[Theorem 1]{Tsang}, \cite[Proposition 1.4]{Blomer} or \cite[Theorem 3]{Reuss}. The proof, for instance, follows the lines of \cite[Proposition 1.4]{Blomer}. 
\begin{lem}\label{alatsang}
Let $X>1$ and let $\ell$, $r$ be integers, $r$ squarefree. Let
\begin{equation}
 I(X,\ell,r):=\Big\{u\in \R;u\text{ and }ru+\ell\in (0,X)\Big\}
\end{equation}
and
\begin{equation}\label{Slr}
S(\ell,r):=\sum_{n\in I(X,\ell,r)}\mu^2(n)\mu^2(rn+\ell).
\end{equation}
Then, for every $r>0$, we have the equality
\begin{equation}\label{S(l,r)=}
S(\ell,r)=f(\ell,r)|I(X,\ell,r)|+O_{r}\left(d_3(\ell)X^{2/3}(\log 2X)^{7/3}\right),
\end{equation}
uniformly for $X,\ell\geq 1$.
where
\begin{equation}\label{fq}
 f(\ell,r)=C_2\prod_{p\mid r}\left(\frac{p^2-1}{p^2-2}\right)\prod_{\substack{p^2\mid \ell\\p\nmid r}}\left(\dfrac{p^2-1}{p^2-2}\right)\kappa((\ell,r^2)),
\end{equation}
with
\begin{align}\label{kappa}
\kappa(p^{\alpha})=
\begin{cases}
 \dfrac{p^2-p-1}{p^2-1},& \text{ if } \alpha=1,\\
 \:\:\:\:\dfrac{p^2-p}{p^2-1},& \text{ if } \alpha=2,\\
 \:\:\:\:\:\:\:\:\:\:0,& \text{ if } \alpha\geq 3.
\end{cases}
\end{align}
We recall that $C_2$ and $h(d)$ were already defined in \eqref{C2} and \eqref{def-h} respectively.
\end{lem}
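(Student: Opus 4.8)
The plan is to evaluate $S(\ell,r)$ by detecting the squarefree conditions with the classical identity $\mu^2(n)=\sum_{d^2\mid n}\mu(d)$. Writing $\mu^2(n)=\sum_{d_1^2\mid n}\mu(d_1)$ and $\mu^2(rn+\ell)=\sum_{d_2^2\mid rn+\ell}\mu(d_2)$, I would swap the order of summation so that the inner sum counts integers $n\in I(X,\ell,r)$ with $n\equiv 0\pmod{d_1^2}$ and $rn+\ell\equiv 0\pmod{d_2^2}$. Since $r$ is squarefree, for the pair of congruences to be simultaneously solvable one needs compatibility conditions relating $(d_2,r)$ and $(d_2,\ell)$; when $(d_1,d_2)=1$ the CRT collapses the two congruences into a single one modulo $d_1^2 d_2^2/(d_2,r)^2$ (roughly speaking), and the count of such $n$ is $|I(X,\ell,r)|$ divided by that modulus, plus an error $O(1)$. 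The common case $(d_1,d_2)>1$ contributes only to the error term because then $d_1 d_2$ is large on the relevant range.

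Next I would truncate the $d_1,d_2$ sums at a parameter, say $d_1 d_2 \le X^{1/3}$, keeping the main term from the small moduli and estimating the tail. For the tail where $d_1^2\mid n$ with $d_1 > X^{1/3}$ one bounds trivially by the number of $n\le X$ divisible by $d_1^2$ summed over large $d_1$, which is $O(X^{2/3})$; symmetrically for $d_2$, using that $n\mapsto rn+\ell$ is a bijection on residues since $(r,d_2^2)$ is controlled and that $r$ is fixed (hence the $O_r$). The diagonal-type contribution $(d_1,d_2)>1$ is similarly $O_r(X^{2/3})$ up to logarithmic and $d_3(\ell)$ factors coming from counting divisors of $\ell$ that arise in the compatibility conditions. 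Assembling the main term, the sum over all $d_1,d_2$ of the densities factors as an Euler product; I would verify locally at each prime $p$ that the local factor is exactly the one in \eqref{fq}--\eqref{kappa}, distinguishing the primes $p\mid r$, the primes $p^2\mid\ell$ with $p\nmid r$, and the behavior at $(\ell,r^2)$, which is where $\kappa$ enters (and explains why $p^3\mid (\ell,r^2)$ forces the local factor, hence $S(\ell,r)$'s main term, to vanish).

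The factor $d_3(\ell)$ in the error deserves a word: it enters because, in the compatibility conditions, the admissible $d_2$ (or the relevant moduli) must divide certain quantities built from $\ell$ and $r$, so summing $1$ over these divisors produces at most a constant times $d_3(\ell)$, after also summing over the $d_1$-divisibility; the exponent $7/3$ on $\log 2X$ comes from a standard three-fold logarithmic loss ($d_3$-type) in handling the double divisor sum over the tail region. This is essentially the bookkeeping in \cite[Proposition 1.4]{Blomer}, adapted to the affine substitution $n\mapsto rn+\ell$ instead of $n\mapsto n+\ell$.

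The main obstacle, and the only genuinely delicate point, is the local analysis at primes dividing $(\ell,r^2)$: one must carefully count, for each prime power $p^\alpha\|(\ell,r^2)$, the joint solutions of $d_1^2\mid n$, $d_2^2\mid rn+\ell$ modulo powers of $p$, keeping track of how the $p$-adic valuations of $r$ and $\ell$ interact, and check that the resulting local density matches $\kappa(p^\alpha)$ exactly and vanishes for $\alpha\ge 3$. (This is also the reason the paper restricts to squarefree $r$: for general $r$ the valuation $v_p(r)$ can exceed $1$ and the definition of $\kappa$ must be refined.) Once that local computation is in hand, the rest is routine: collect the main term, bound the three error contributions (large $d_1$, large $d_2$, $(d_1,d_2)>1$) by $O_r(d_3(\ell)X^{2/3}(\log 2X)^{7/3})$, and note the uniformity in $X,\ell\ge 1$ follows since all estimates were made uniformly in those parameters with the $r$-dependence isolated.
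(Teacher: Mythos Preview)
Your overall strategy is correct and coincides with the paper's: detect squarefreeness via M\"obius, swap sums, truncate at a parameter $y$, extract the main term as an Euler product, and bound the tail. However, there is a genuine gap in your tail estimate. You write that ``for the tail where $d_1^2\mid n$ with $d_1>X^{1/3}$ one bounds trivially by the number of $n\le X$ divisible by $d_1^2$ summed over large $d_1$, which is $O(X^{2/3})$; symmetrically for $d_2$.'' The first half is fine (using $\mu^2(rn+\ell)\le 1$ to collapse the $d_2$-sum), but the ``symmetric'' step fails: after truncating $d_1\le y$ the weight $\sum_{d_1\le y,\,d_1^2\mid n}\mu(d_1)$ on $n$ is no longer bounded by $1$, so you cannot simply repeat the argument. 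If instead you truncate at $d_1d_2\le y$, the tail contains pairs with both $d_1,d_2$ of moderate size, and the naive count $N(d_1,d_2)\le X/(d_1d_2)^2+O(1)$ contributes an unacceptable $\sum_{d_1d_2>y,\;d_i\le\sqrt{X}}1\asymp X$ from the $O(1)$. This is exactly the nontrivial step in Tsang's method, and it cannot be skipped.

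The paper handles the tail by a dyadic decomposition into boxes $j\sim J$, $k\sim K$ and then counts solutions of $k^2v=rj^2u+\ell$ by fixing $k,u$ and solving the congruence $j^2ru\equiv-\ell\pmod{k^2}$ for $j$. After extracting successive common factors $f=(j,k)$, $g=(k_0^2,r)$, $h=(k_1,t)$ one finds at most $2d(k_0)$ admissible residues for $j_0$ modulo $k'$, and summing yields $\mathcal{N}(J,K)\ll_r d_3(\ell)\,X(JK)^{-1/2}\log X$. The factor $d_3(\ell)$ arises specifically from the sum $\sum_{f^2h\mid\ell}1$ over these extracted gcd's, not from ``admissible $d_2$ dividing quantities built from $\ell$'' as you suggest; and the exponent $7/3$ on the logarithm comes from balancing $y\log y$ against $d_3(\ell)Xy^{-1/2}(\log X)^3$ at $y=X^{2/3}(\log X)^{4/3}$, not from a generic ``three-fold logarithmic loss''. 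Your description of the local computation of $f(\ell,r)$ is right in spirit; in the paper it is carried out by evaluating $u_p(\ell,r)$ (the number of residues $v\bmod p^2$ with $p^2\mid v$ or $p^2\mid rv+\ell$) in five cases according to the divisibility of $r$ and $\ell$ by $p$ and $p^2$.
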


\begin{proof}

We start by defining
$$
\sigma(n)=\prod_{p^2\mid n}p\text{, \;}n\neq 0,
$$
and 
\begin{equation}\label{xi}
\xi(n)=\sigma(n)\sigma(rn+\ell).
\end{equation}
Notice that the right-hand side of equation \eqref{xi} above actually depends on $\ell$ and $r$, but since these numbers will be held fixed in the following calculations, we omit this dependency.\\
Since $\xi(n)$ is an integer $\geq 1$ and since
$$
\mu^2(n)\mu^2(rn+\ell)=1\iff \xi(n)=1,
$$
we deduce the equality
\begin{equation}\label{S=muNd}
S(\ell,r)=\displaystyle\sum_{n\in I(X,\ell,r)}\sum_{d\mid \xi(n)}\mu(d)=\sum_{d\geq 1}\mu(d)N_d(\ell,r),
\end{equation}
where
$$
N_d(\ell,r)=\left|\Big\{n\in I(X,\ell,r); \;\xi(n)\equiv 0\!\!\!\pmod{d}\Big\}\right|.
$$
Notice that the condition
$$
p\mid \xi(n)
$$
only depends on the congruence class of $n\!\! \pmod{p^2}$, for fixed values of $\ell$ and $r$. We let
\begin{equation}\label{u}
u_p(\ell,r):=\left|\Big\{0\leq v\leq p^2-1; \xi(v)\equiv 0 \!\!\!\!\pmod {p}\Big\}\right|,
\end{equation}
and
$$
U_d(\ell,r):=\prod_{p\mid d}u_p(\ell,r).
$$
Then, by the Chinese Remainder Theorem, we have the equality
\begin{equation}\label{Nd=}
N_d(\ell,r)=U_d(\ell,r)\frac{|I(X,\ell,r)|}{d^2}+O\left(U_d(\ell,r)\right),
\end{equation}
for every positive squarefree integer $d$.\\
We also notice that if $(p,r)=1$, then $|u_p(\ell,r)|\leq 2$ and that $|u_p(\ell,r)|\leq p^2$ in general. Therefore we have the upper bound
$$
U_d(\ell,r)\ll_{r} 2^{\omega(d)}.
$$

Let $2\leq y\leq X$ be a parameter, which will be chosen later to be a power of $X$. As we multiply formula \eqref{Nd=} by $\mu(d)$ and sum for $d\leq y$, we obtain the equality
\begin{equation}\label{d<y--1}
\sum_{d\leq y}\mu(d)N_d(\ell,r)=\sum_{d\leq y}\mu(d)U_d(\ell,r)\frac{|I(X,\ell,r)|}{d^2}+O_r\left(\sum_{d\leq y}2^{\omega(d)}\right).
\end{equation}
By completing the first sum on the right-hand side of \eqref{d<y--1}, we have
\begin{equation}\label{d<y}
\sum_{d\leq y}\mu(d)N_d(\ell,r)=\prod_{p}\left(1-\frac{u_p(\ell,r)}{p^2}\right)|I(X,\ell,r)|+O_r\left(\dfrac{X\log y}{y}+y\log y\right). 
\end{equation}
For large values of $d$, formula \eqref{Nd=} is useless. Instead of it we will deduce by different means an estimation for
$$
N_{>y}(\ell,r):=\sum_{d> y}\mu(d)N_d(\ell,r)
$$
from which we will deduce the result.

We notice that $d\mid \xi(n)$ if and only if there exist $j,k$ such that $d=jk$, $j^2\mid n$ and $k^2\mid rn+\ell$. Moreover since $n,rn+\ell<X$, we have $j,k<\sqrt{X}$. From this observation we deduce

\begin{align}\label{first-jk}
\left|N_{>y}(\ell,r)\right|&=\left|\displaystyle\displaystyle\sum_{\substack{y< d\leq X}}\mu(d)\left|\Big\{n\in I(X,\ell,r); \; \xi(n)\equiv 0\!\!\!\pmod d\Big\}\right|\right| \notag\\
&\leq\displaystyle\sum_{\substack{j,k\leq \sqrt{X}\\jk>y}}\left|\Big\{n\in \Z;0<n,rn+\ell<X\text{ and } j^2\mid n, k^2\mid rn+\ell\Big\}\right|\notag\\
&=\displaystyle\sum_{\substack{j,k\leq \sqrt{X}\\jk>y}}N(j,k),
\end{align}
by definition.

We shall divide the possible values of $j$ and $k$ into sets of the form

$$\mathcal{B}(J,K):=\Big\{(j,k); j\sim J,k\sim K\Big\}.$$
We can do the division using at most $O((\log X)^2)$ of these sets, since we are summing over $j, k\leq X^{1/2}$.\\
Let

\begin{multline}\label{NJK}
\mathcal{N}(J,K):= \sum_{j\sim J, k\sim K}N(j,k)\\
=\left|\Big\{(j,k,u,v); j\sim J,k\sim K, 0<j^2u,k^2v<X,\text{ and }k^2v=rj^2u+\ell\Big\}\right|
\end{multline}

By taking the maximum over all $J,K$, we obtain a pair $(J,K)$ with $J,K\leq X^{1/2}$ such that $JK\geq y/4$ and we have the upper bound

\begin{equation}\label{S''<Nlog}
N_{>y}(\ell,r)\ll \mathcal{N}(J,K)(\log X)^2.
\end{equation}
%

At last, we estimate $\mathcal{N}(J,K)$ in the following way

$$
\mathcal{N}(J,K)\leq \sum_{k\sim K}\sum_{u\leq XJ^{-2}}\sum_{\substack{j\sim J\\ j^2ru\equiv -\ell\!\!\!\pmod{k^2}}}1.
$$

For $j,k$ relevant to the sum above, we write $f=(j,k).$ From the congruence condition in the inner sum, we have that $f^2\mid \ell$. So we write
$$
j_0=\frac{j}{f},\;k_0=\frac{k}{f}\text{ and }\ell_0=\frac{\ell}{f^2}.
$$
The congruence then becomes
$$
j_0^2ru\equiv -\ell_0\pmod{k_0^2}.
$$
Now, let $g=(k_0^2,r)$ as above we have $g\mid \ell_0$. We write
$$
k_1=\frac{k_0^2}{g},s=\frac{r}{g}\text{ and }t=\frac{\ell_0}{g}.
$$
That transforms the congruence into
$$
j_0^2su\equiv - t\!\!\!\pmod{k_1}.
$$
Finally, let $h=(k_1,t)$. From the considerations above, we must have $h\mid u$. We write
$$
k'=\frac{k_1}{h},t'=\frac{t}{h}\text{ and }u'=\frac{u}{h}.
$$
So the congruence becomes
$$
j_0^2su'\equiv - t'\!\!\!\pmod{k'}
$$
and since $(t',k')=1$, it has at most $2.2^{\omega(k')}\leq 2d(k_0)$ solutions in $j_0 \pmod{k'}$. Therefore we have

\begin{align*}
\mathcal{N}(J,K)&\leq\sum_{g\mid r}\sum_{f^2h\mid \ell}\sum_{\substack{k_0\sim K/f\\ gh\mid k_0^2}}\sum_{u'\leq XJ^{-2}h^{-1}}\sum_{\substack{j_0\sim J/f\\ j_0^2su'\equiv -t'\!\!\!\pmod{k_0^2/gh}}}1\\
&\leq 2\sum_{g\mid r}\sum_{f^2h\mid \ell}\sum_{k_0\sim K/f}XJ^{-2}h^{-1}\left\{\dfrac{Jgh}{fk_0^2}+1\right\}d(k_0)\\
&\ll_r \sum_{f^2h\mid \ell}\sum_{k_0\sim K/f}XJ^{-2}\left\{\dfrac{J}{fk_0^2}+1\right\}d(k_0)\\
&\ll \sum_{f^2h\mid \ell}XJ^{-2}\left\{\dfrac{J}{K^2}+\frac{1}{f}\right\}K\log K\\
&\ll d_3(\ell)XJ^{-2}\left\{\dfrac{J}{K^2}+1\right\}K\log X.
\end{align*}

Hence
$$
\mathcal{N}(J,K)\ll_r d_3(\ell)\left\{Xy^{-1}+XJ^{-2}K\right\}\log X.
$$
A similar inequality with the roles of $J$ and $K$ interchanged on the right hand side can be obtained in an analogous way. Combining the two formulas, we deduce

\begin{align}\label{NJK-end}
\mathcal{N}(J,K)&\ll_r d_3(\ell)\left\{Xy^{-1}+X{(JK)}^{-1/2}\right\}\log X\notag\\
&\ll d_3(\ell)Xy^{-1/2}\log X. 
\end{align}

Replacing formula \eqref{NJK-end} in \eqref{S''<Nlog} and adding the latter to \eqref{d<y}, it gives
$$
S(\ell,r)=\prod_{p}\left(1-\frac{u_p(\ell,r)}{p^2}\right)|I(X,\ell,r)|+O_r\left(y\log y+d_3(\ell)Xy^{-1/2}(\log X)^3\right).
$$

We make the choice $y=X^{2/3}(\log X)^{4/3}$ obtaining

\begin{equation}\label{almost}
S(\ell,r)=\prod_{p}\left(1-\frac{u_p(\ell,r)}{p^2}\right)|I(X,\ell,r)|+O_r\left(d_3(\ell)X^{2/3}(\log X)^{7/3}\right).
\end{equation}
  
We finish by a study of $u_p(\ell,r)$. We distinguish five different cases (we recall that $r$ is squarefree)

\begin{itemize}
 \item If $p\mid r$, $p^2\mid \ell$ then 
 $$
 u_p(\ell,r)=p,
 $$
 \item If $p\mid r$, $p\mid \ell$ but $p^2\nmid \ell$ then 
 $$
 u_p(\ell,r)=p+1,
 $$
 \item If $p\mid r$, $p\nmid \ell$ then 
 $$
 u_p(\ell,r)=1,
 $$
 \item If $p\nmid r$, $p^2\mid \ell$ then 
 $$
 u_p(\ell,r)=1,
 $$
 \item If $p\nmid r$, $p^2\nmid \ell$ then 
 $$
 u_p(\ell,r)=2.
 $$ 
\end{itemize}

The lemma is now a consequence of formula \eqref{almost} and the different values of $u_p(\ell,r)$.
\end{proof}

\subsection{Sums involving the $B_2$ function}
${}$\\
In the following we study certain sums involving the Bernoulli polynomials $B_2(x)$. In the next lemma, we deal with the simplest case
\begin{equation}\label{A}
A(Y;q,a)=\sum_{\substack{n\geq 1\\(n,q)=1}}\left\{B_2\left(\frac{Y^2}{n^2}+\frac{{a\bar n}^2}{q}\right)-B_2\left(\frac{{a\bar n}^2}{q}\right)\right\},
\end{equation}
where $Y$ is a positive real number, $a,q$ are coprime integers.
The sum above will serve as an archetype for more complicated sums appearing in the proof of Proposition \ref{propsum}, which in their turn will be central for estimating $C[\gamma](X,q)$.
One elementary bound for $A(Y;q,a)$ can be given by noticing that we have both

\begin{equation}\label{B2B}
B_2\left(\frac{Y^2}{n^2}+\frac{{a\bar n}^2}{q}\right)-B_2\left(\frac{{a\bar n}^2}{q}\right)\ll 1, 
\end{equation}
since $B_2$ is bounded, and

\begin{align}\label{B1B}
B_2\left(\frac{Y^2}{n^2}+\frac{{a\bar n}^2}{q}\right)-B_2\left(\frac{{a\bar n}^2}{q}\right)&=\int_{\frac{{a\bar n}^2}{q}}^{\frac{Y^2}{n^2}+\frac{{a\bar n}^2}{q}}B_1(v)dv\notag\\
&\ll \frac{Y^2}{n^2},
\end{align}
since $B_1$ is also a bounded function. Gathering \eqref{B2B} and \eqref{B1B}, we obtain

\begin{align}\label{el.bd}
A(Y;q,a)&\ll \sum_{n\leq Y}1+\sum_{n>Y}\dfrac{Y^2}{n^2}\notag\\
&\ll Y.
\end{align}
In the following lemma we give a non-trivial bound for the sum above by means of Bourgain's bound, in the form of Corollary \ref{Cor-Bourgain}. What we obtain is better than trivial by just a small power of $\log q$, but it is sufficient to obtain Theorem \ref{main-aa+1}.

\begin{lem}\label{exp}
There exists $\delta>0$ such that for every $\epsilon>0$, we have the inequality
\begin{equation}\label{eq-exp}
A(Y;q,a)\ll_{_{\epsilon}} Y(\log q)^{-\delta},
\end{equation}
uniformly for $a$ and $q$ integers satisfying $q\geq 2$ $(a,q)=1$, and $Y>q^{\epsilon}$.
\end{lem}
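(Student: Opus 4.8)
The plan is to split the sum $A(Y;q,a)$ at a threshold $n = M$ (a small power of $Y$, say $M = Y^{1/2}$ or so, to be adjusted), using the bound \eqref{B2B} for the ``short'' range and \eqref{B1B} together with Corollary \ref{Cor-Bourgain} for the ``long'' range. The tail $n > M$ contributes $O(Y^2/M)$ by the trivial estimate \eqref{B1B}, so if $M$ is a fixed positive power of $Y$ this is already $\ll Y^{1-c}$ and hence absorbed into $Y(\log q)^{-\delta}$; the entire game is therefore in the range $n \leq M$. For $n$ in this range I would expand $B_2$ into its Fourier series,
$$
B_2(x) = \sum_{h\neq 0} \frac{e(hx)}{(2\pi i h)^2},
$$
which converges nicely since $B_2$ is continuous. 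Substituting $x = Y^2/n^2 + a\bar n^2/q$ and $x = a\bar n^2/q$ and interchanging the (finite) sum over $n$ with the sum over $h$, one is left with estimating, for each fixed $h \neq 0$,
$$
\sum_{\substack{n \leq M\\ (n,q)=1}} e\!\left(\frac{ha\bar n^2}{q}\right)\left(e\!\left(\frac{hY^2}{n^2}\right) - 1\right).
$$

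The factor $e(hY^2/n^2) - 1$ is a smooth (in $n$) weight that oscillates slowly; I would remove it by partial summation (Abel summation with respect to $n$), reducing everything to bounds for the incomplete exponential sums
$$
\sum_{\substack{n \leq N\\ (n,q)=1}} e\!\left(\frac{ha\bar n^2}{q}\right), \qquad N \leq M,
$$
to which Corollary \ref{Cor-Bourgain} applies provided $N \geq q^{\epsilon'}$ — this is where the hypothesis $Y > q^{\epsilon}$ is used, since we then have room to take $M \geq q^{\epsilon'}$ for some $\epsilon' > 0$ and still keep $M$ a power of $Y$. For $N$ below $q^{\epsilon'}$ the contribution is trivially $O(q^{\epsilon'})$, which is negligible against $Y$. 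Corollary \ref{Cor-Bourgain}, applied with $ha$ in place of $a$ (note $(ha,q)=1$ need not hold, but one can factor out $(h,q)$ and apply the bound on the resulting coprime sum, or simply note $q$ is not assumed prime here so one handles $(h,q) = d$ by the standard reduction), gives each such sum $\ll_{\epsilon'} N(\log q)^{-\delta_0}$.

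Summing the contributions: partial summation turns the $e(hY^2/n^2)-1$ weight, whose total variation over $n \leq M$ is $O(hY^2/M^2 + \text{(endpoint)})$, against the saving $(\log q)^{-\delta_0}$ from Bourgain, producing a bound of shape $\ll_{\epsilon} |h|\, Y^2 M^{-1}(\log q)^{-\delta_0}$ for the $n$-sum; dividing by $h^2$ from the Fourier coefficient and summing over $h \neq 0$ converges and yields $\ll_{\epsilon} Y^2 M^{-1}(\log q)^{-\delta_0}$. With $M$ chosen as a small power of $Y$ this is $\ll_{\epsilon} Y^{2-c'}(\log q)^{-\delta_0}$; combined with the tail bound $O(Y^2/M) = O(Y^{2-c'})$ and re-balancing, one gets $A(Y;q,a) \ll_{\epsilon} Y(\log q)^{-\delta}$ with $\delta = \delta_0/2$, say, after choosing the exponent of $M$ so that the power-of-$Y$ losses are reabsorbed (one has slack because $Y > q^{\epsilon}$ forces $(\log q)^{-\delta} \gg Y^{-\delta/\epsilon}$-type comparisons to go the right way, so a genuine power saving in $Y$ trivially beats the log saving we are claiming — in fact the cleanest route is: tail $\ll Y^2/M$, main term $\ll Y^2 M^{-1}(\log q)^{-\delta_0} + q^{\epsilon'}$, and picking $M \asymp Y$ makes the main term $\ll Y(\log q)^{-\delta_0} + q^{\epsilon'} \ll_\epsilon Y(\log q)^{-\delta}$). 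The main obstacle is the bookkeeping in the partial summation step: one must ensure the variation of the weight $e(hY^2/n^2)-1$ is controlled uniformly and that the resulting $|h|$-dependence is mild enough to survive division by $h^2$ and summation over $h$ — this is routine but is the only place where a careless estimate would break the argument; everything else is a direct application of Corollary \ref{Cor-Bourgain} and the elementary bounds \eqref{B2B}–\eqref{B1B} already established.
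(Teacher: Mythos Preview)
Your overall strategy---Fourier-expand $B_2$, remove the smooth weight by partial summation, then invoke Corollary~\ref{Cor-Bourgain}---is exactly the paper's, but the execution has two genuine gaps.

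\textbf{The variation estimate is wrong.} You assert that the total variation of $w(n)=e(hY^2/n^2)-1$ over $n\leq M$ is $O(hY^2/M^2)$. In fact $|w'(t)|\asymp |h|Y^2/t^3$, so the variation over $[1,M]$ is $\int_1^M|w'|\asymp |h|Y^2$, entirely dominated by small $t$. If one repairs this by using $|w(n)-w(n+1)|\ll\min(1,|h|Y^2/n^3)$ together with $|S(n)|\ll n(\log q)^{-\delta_0}$, the partial-summation bound becomes $\ll |h|^{2/3}Y^{4/3}(\log q)^{-\delta_0}$ for each $h$, hence $\gg Y^{4/3}(\log q)^{-\delta_0}$ after summing over $h$: worse than the trivial bound $O(Y)$ in the regime $Y>q^{\epsilon}$.

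\textbf{The threshold is wrong.} With $M\asymp Y$ the tail $n>M$ contributes $Y^2/M\asymp Y$, with no saving whatsoever; with $M=Y^{1/2}$ the tail is $Y^{3/2}$, not $Y^{1-c}$ as you wrote.

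The missing idea, which fixes both problems simultaneously, is a \emph{two-sided} restriction to $n\asymp Y$. The paper sets $\theta=\theta(q)=(\log q)^{\delta_1/2}$ and uses three ranges: for $n<Y\theta^{-1}$ the bound $|\Delta_Y|\ll 1$ of \eqref{B2B} gives $O(Y\theta^{-1})$; for $n>Y\theta$ the bound \eqref{B1B} gives $O(Y\theta^{-1})$; and only on the window $Y\theta^{-1}\leq n\leq Y\theta$ does one Fourier-expand and sum by parts. In that window $Y^2/n^2\in[\theta^{-2},\theta^2]$, so the variation of $w$ is genuinely $O(|h|\theta^2)$ and partial summation plus Bourgain yields an inner sum $\ll |h|\,Y\theta^{-1}$. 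Because this is linear in $|h|$, the sum $\sum_{h\neq 0}|h|^{-1}$ would diverge, so one must also truncate the Fourier series at $|h|\leq\theta^3$ (the Fourier tail then costs $O(Y\theta^{-2})$); this is the step you called ``routine'' but it is not optional. Finally, in the paper $q$ is prime, so for $q$ large every $|h|\leq\theta^3$ is automatically coprime to $q$, and your digression on reducing $(h,q)>1$ is unnecessary.
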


\begin{proof}
By Corollary \ref{Cor-Bourgain}, we know that there exists $\delta_1>0$ such that
\begin{equation}\label{bourg}
\sum_{\substack{n\leq Y\\(n,q)=1}}e\left(\frac{a{\bar n}^2}{q}\right)\ll_{\epsilon} Y(\log q)^{-\delta_1},
\end{equation}
uniformly for $(a,q)=1$ and $Y>q^{\epsilon/10}$.
For simplification, we write

\begin{equation}\label{Delta}
\Delta_Y\left(n;q,a\right)=B_2\left(\frac{Y^2}{n^2}+\frac{{a\bar n}^2}{q}\right)-B_2\left(\frac{{a\bar n}^2}{q}\right).
\end{equation}

The sum on the left-hand side of \eqref{bourg} appears naturally once we use the Fourier series developpment for $B_2(x)$
\begin{equation}\label{FOURIER}
B_2(x)=\sum_{h\neq 0}\dfrac{1}{4\pi^2 h^2}e(hx)
\end{equation}
in formula \eqref{eq-exp}. Let

\begin{equation}\label{theta=}
 \theta(q)=(\log q)^{\delta_1/2}.
\end{equation}
By \eqref{B2B} and the Fourier decomposition of $B_2(x)$ \eqref{FOURIER}, we have

\begin{align}\label{first-exp}
\sum_{\substack{n\leq Y\theta(q)\\(n,q)=1}}\Delta_Y\left(n;q,a\right)&=\sum_{\substack{Y\theta(q)^{-1}\leq n\leq Y\theta(q)\\(n,q)=1}}\Delta_Y\left(n;q,a\right)+O(Y\theta(q)^{-1})\notag\\
&=\sum_{h\neq 0}\dfrac{1}{4\pi^2h^2}\sum_{\substack{Y\theta(q)^{-1}\leq n\leq Y\theta(q)\\(n,q)=1}}\left(e\left(\dfrac{hY^2}{n^2}\right)-1\right)e\left(\frac{ah{\bar n}^2}{q}\right)+O(Y\theta(q)^{-1})\notag\\
&=\sum_{1\leq |h|\leq \theta(q)^3}\dfrac{1}{4\pi^2h^2}\sum_{\substack{Y\theta(q)^{-1}\leq n\leq Y\theta(q)\\(n,q)=1}}\left(e\left(\dfrac{hY^2}{n^2}\right)-1\right)e\left(\frac{ah{\bar n}^2}{q}\right)+O(Y\theta(q)^{-1}).
\end{align}

Summing by parts, we see that the inner sum of the right-hand side of inequality \eqref{first-exp} is

$$
\ll \sum_{\substack{Y\theta(q)^{-1}\leq m\leq Y\theta(q)}}\dfrac{hY^2}{m^3}\left|\sum_{\substack{Y\theta(q)^{-1}\leq n\leq m\\ (n,q)=1}}e\left(\frac{ah{\bar n}^2}{q}\right)\right|+\left|\sum_{Y\theta(q)^{-1}\leq n\leq Y\theta(q)}e\left(\frac{ah{\bar n}^2}{q}\right)\right|.\notag\\
$$
Now, if $q$ is prime and sufficiently large, then any integer $h$ satisfying $1\leq |h|\leq \theta(q)^3$ is coprime with $q$. Then, by \eqref{bourg}, the above expression is 

\begin{align}\label{Abel-exp}
&\ll \sum_{\substack{Y\theta(q)^{-1}\leq m\leq Y\theta(q)}}\dfrac{|h|Y^2}{m^2}(\log q)^{-\delta_1}+Y\theta(q)^{-1}\notag\\
&\ll |h|Y\theta(q)^{-1}.
\end{align}
As we insert the upper-bound \eqref{Abel-exp} in formula \eqref{first-exp}, we obtain

\begin{equation}\label{exp-n-small}
\sum_{\substack{n\leq Y\theta(q)\\(n,q)=1}}\Delta_Y\left(n;q,a\right)\ll Y\theta(q)^{-1}\log\log q\ll Y(\log q)^{-\delta_1/4}.
\end{equation}

For the remainder terms we use the trivial upper bound \eqref{B1B} to deduce the inequality

\begin{align}\label{exp-n-big}
\sum_{\substack{n> Y\theta(q)\\(n,q)=1}}\Delta_Y\left(n;q,a\right)&\ll \sum_{n>Y\theta(q)}\dfrac{Y^2}{n^2}\notag\\
&\ll Y\theta(q)^{-1}.
\end{align}
We combine the upper bounds \eqref{exp-n-small} and \eqref{exp-n-big} to conclude. Together they give
$$
\sum_{\substack{n\geq 1\\(n,q)=1}}\Delta_Y\left(n;q,a\right)\ll Y(\log q)^{-\delta_1/4}.
$$
uniformly for $(a,q)=1$ and $Y>q^{\epsilon}$.  The proof of lemma \ref{exp} is now complete.
\end{proof}
\begin{rmk}
Among the hypothesis of lemma \eqref{exp}, it is essential that we have $(a,q)=1$. In the case where $q\mid a$, one can not improve on \eqref{el.bd}. Indeed, it is possible to show that (see \cite[lemma 4.3]{RMN})
$$
A(Y;q,0)=-\dfrac{\varphi(q)}{q}\dfrac{\zeta(3/2)}{2\pi}Y+O(d(q)Y^{2/3})\;\;\;(Y\geq 1).
$$
\end{rmk}

\subsection{A consequence of Lemma \ref{exp}}
${}$\\
In order to evaluate $S[\gamma](X,q)$ (see \eqref{def-S}), it is important to consider the following sum which appears in equation \eqref{S(l,r)=}

\begin{dfn}\label{fraksgamma}
For $q,r,s$ integers satisfying $q\geq 1$, $q\nmid rs$, let
\begin{equation}\label{frakS}
\mathfrak{S}[\gamma](X,q):=\sum_{\ell\equiv s\!\!\!\pmod q}f(\ell,r)|I(X,\ell,r)|,
\end{equation}
where $\gamma=\gamma_{r,s}$.
\end{dfn}

The purpose of this subsection is to prove the following
\begin{prop}\label{propsum}
There exists $\delta>0$, such that for every $\epsilon>0$, for every $r\neq 0$ squarefree, one has
\begin{equation}\label{propsum-eq}
\mathfrak{S}[\gamma](X,q)=\left(\dfrac{6}{\pi^2}\right)^2\left(1+\dfrac{1}{q^2(q^2-2)}\right)^{-1}X^2/q+O_{\epsilon,r}(q^{1+\epsilon} + X^{1/2}q^{1/2}(\log q)^{-\delta}),
\end{equation}
uniformly for $X>1$, $s$ integer and $q$ prime such that $q\nmid rs$, with $C(q)$ as in \eqref{cq}

\end{prop}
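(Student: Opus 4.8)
The plan is to decompose $f(\ell,r)$ along a Dirichlet convolution, complete each resulting inner sum exactly, read off the main term from an Euler product, and estimate the remainder by reducing it to the sums of Lemma~\ref{exp}. I assume $r>0$; the case $r<0$ is the same after replacing $r$ by $|r|$, since $f(\ell,r)$ and $|I(X,\ell,r)|$ depend on $r$ only through $|r|$. Expanding the product in \eqref{fq} and writing $\tfrac{p^2-1}{p^2-2}=1+\tfrac1{p^2-2}$, $\kappa(p)=1-\tfrac p{p^2-1}$, $\kappa(p^2)=1-\tfrac p{p^2-1}+\tfrac1{p^2-1}$ (see \eqref{kappa}), one obtains
$$
f(\ell,r)=\lambda_r\sum_{D\geq1}\beta_r(D)\,\mathbf{1}[D\mid\ell],\qquad \lambda_r:=C_2\prod_{p\mid r}\frac{p^2-1}{p^2-2},
$$
where $\beta_r$ is the multiplicative function supported on $D=m_1m_2^2b^2$ with $m_1m_2\mid r$, $(m_1,m_2)=1$, $\mu^2(b)=1$, $(b,r)=1$, and $\beta_r(m_1m_2^2b^2)=\prod_{p\mid m_1}\tfrac{-p}{p^2-1}\prod_{p\mid m_2}\tfrac1{p^2-1}\prod_{p\mid b}\tfrac1{p^2-2}$. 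Since $|\beta_r(D)|\ll_r D^{-1}$ and $\#\{D\leq T:\beta_r(D)\neq0\}\ll_r T^{1/2}$, one has $\sum_D|\beta_r(D)|<\infty$, so substituting into \eqref{frakS} and interchanging summations is legitimate:
$$
\mathfrak S[\gamma](X,q)=\lambda_r\sum_{\substack{D\geq1\\(D,q)=1}}\beta_r(D)\,T_D,\qquad T_D:=\sum_{\ell\equiv\ell_0(D)\!\!\!\pmod{Dq}}|I(X,\ell,r)|,
$$
where $\ell_0(D)$ is the residue class modulo $Dq$ with $\ell_0\equiv0\pmod D$, $\ell_0\equiv s\pmod q$; the terms with $q\mid D$ vanish since $q\nmid s$.

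Next I complete $T_D$ exactly. The function $g(y):=|I(X,y,r)|$ is continuous and piecewise linear, supported on $[-rX,X]$, with $g'=\tfrac1r\mathbf{1}_{(-rX,(1-r)X)}-\tfrac1r\mathbf{1}_{(0,X)}$ a.e. Writing $g(\ell)=\int_{-rX}^{\ell}g'$, exchanging the $\ell$-sum with the $y$-integral, and using $\#\{\ell\equiv\nu\!\!\pmod M:y<\ell\leq X\}=\tfrac{X-y}M+B_1\!\left(\tfrac{y-\nu}M\right)-B_1\!\left(\tfrac{X-\nu}M\right)$ (from \eqref{floor-B1}) together with $B_2'=B_1$, one gets, for $M=Dq$ and $\nu=\ell_0(D)$, the exact identity
$$
T_D=\frac{X^2}{Dq}+\frac{Dq}r\left[B_2\!\left(\tfrac{(1-r)X-\nu}{Dq}\right)-B_2\!\left(\tfrac{-rX-\nu}{Dq}\right)-B_2\!\left(\tfrac{X-\nu}{Dq}\right)+B_2\!\left(\tfrac{-\nu}{Dq}\right)\right],
$$
and by the Chinese Remainder Theorem $\{\nu/(Dq)\}=\{s\overline D/q\}$, where $\overline D$ is the inverse of $D$ modulo $q$. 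The contribution of the terms $X^2/(Dq)$ to $\mathfrak S[\gamma](X,q)$ is $\lambda_r\tfrac{X^2}q\sum_{(D,q)=1}\beta_r(D)/D$. The Euler product of $\sum_D\beta_r(D)/D$ equals $\prod_{p\mid r}\!\left(1-\tfrac1{p^2-1}+\tfrac1{p^2(p^2-1)}\right)\prod_{p\nmid r}\!\left(1+\tfrac1{p^2(p^2-2)}\right)=\prod_{p\mid r}\tfrac{p^2-1}{p^2}\prod_{p\nmid r}\tfrac{(p^2-1)^2}{p^2(p^2-2)}$, using $\tfrac{(p^2-1)^2}{p^2(p^2-2)}=1+\tfrac1{p^2(p^2-2)}$. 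Dividing out the $p=q$ Euler factor $1+\tfrac1{q^2(q^2-2)}$ (legitimate as $q\nmid r$), multiplying by $\lambda_r$, and recalling $C_2=\prod_p\tfrac{p^2-2}{p^2}$ from \eqref{C2}, every Euler factor collapses to $\tfrac{(p^2-1)^2}{p^4}$, whence $\lambda_r\sum_{(D,q)=1}\beta_r(D)/D=\left(1+\tfrac1{q^2(q^2-2)}\right)^{-1}\prod_p\tfrac{(p^2-1)^2}{p^4}=\left(\tfrac6{\pi^2}\right)^2\left(1+\tfrac1{q^2(q^2-2)}\right)^{-1}$, the asserted main term.

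It remains to bound the remainder $\mathcal E:=\lambda_r\tfrac qr\sum_{(D,q)=1}\beta_r(D)D\,[\,\cdots\,]$, where $[\,\cdots\,]$ is the bracket above; telescoping, $[\,\cdots\,]=\sum_{c\in\{1-r,-r,1\}}\pm\left(B_2\!\left(\tfrac{cX}{Dq}-\tfrac{s\overline D}q\right)-B_2\!\left(-\tfrac{s\overline D}q\right)\right)$. For each of the $O_r(1)$ admissible pairs $(m_1,m_2)$, write $D=m_1m_2^2b^2$ and sum over $b$ squarefree, coprime to $rq$; then $\beta_r(D)D$ is a constant (depending on $r,m_1,m_2$) times $\prod_{p\mid b}\tfrac{p^2}{p^2-2}$, $\overline D\equiv\overline{m_1}\,\overline{m_2}^{\,2}\,\overline b^{\,2}\pmod q$, and $\tfrac{cX}{Dq}=\pm\tfrac{Y^2}{b^2}$ with $Y^2=\tfrac{|c|X}{m_1m_2^2q}\asymp_r X/q$. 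Expanding $B_2$ by its Fourier series \eqref{FOURIER}, each of these sums over $b$ is of exactly the shape treated in the proof of Lemma~\ref{exp} (the sign of $\pm Y^2/b^2$ being immaterial), with the multiplicative weight $\prod_{p\mid b}\tfrac{p^2}{p^2-2}$ and the conditions $\mu^2(b)=1$, $(b,rq)=1$ stripped off by standard Dirichlet-convolution identities whose auxiliary coefficient functions are bounded exactly as in Lemma~\ref{conv}. One is thereby reduced to absolutely convergent linear combinations of the quantities $A(Y';q,a')$ of \eqref{A} with $(a',q)=1$ and $Y'=Y/(\text{parameter})$. When $Y'>q^\epsilon$, Lemma~\ref{exp} gives $A(Y';q,a')\ll_\epsilon Y'(\log q)^{-\delta}$, contributing $\ll_{\epsilon,r}\sqrt{X/q}\,(\log q)^{-\delta}$ in total; when $Y'\leq q^\epsilon$, the trivial bound \eqref{el.bd} and the rapid decay of the coefficients give a contribution $\ll_{\epsilon,r}q^{1/2+\epsilon}X^{\epsilon-1/2}$. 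Multiplying by $\lambda_r\tfrac qr$ and using $q\leq X$ yields $\mathcal E\ll_{\epsilon,r}\sqrt{Xq}\,(\log q)^{-\delta}+q^{3/2+\epsilon}X^{\epsilon-1/2}\ll_{\epsilon,r}\sqrt{Xq}\,(\log q)^{-\delta}+q^{1+2\epsilon}$, and after renaming $\epsilon$ this completes the proof.

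The main obstacle is this last step: disentangling the weighted, arithmetically restricted $b$-sums into the bare exponential sums $\sum_{n\leq N,(n,q)=1}e(a\overline n^2/q)$ to which Corollary~\ref{Cor-Bourgain} applies, and carefully isolating the range in which Bourgain's bound furnishes the $(\log q)^{-\delta}$ saving from the complementary range, which only produces the $q^{1+\epsilon}$ term. By contrast, the main-term computation is a mechanical collapse of Euler factors once one notices $\tfrac{(p^2-1)^2}{p^2(p^2-2)}=1+\tfrac1{p^2(p^2-2)}$ and $1-\tfrac1{p^2-1}+\tfrac1{p^2(p^2-1)}=\tfrac{p^2-1}{p^2}$, and the decomposition and completion steps are purely formal.
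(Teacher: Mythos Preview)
Your proof is correct and follows essentially the same route as the paper's. Both arguments decompose $f(\ell,r)$ multiplicatively, complete the resulting arithmetic-progression sums exactly via \eqref{floor-B1} and the antiderivative relation $B_2'=B_1$ to produce the four $B_2$–terms, read off the main term as an Euler product (your identities $1+\tfrac{1}{p^2(p^2-2)}=\tfrac{(p^2-1)^2}{p^2(p^2-2)}$ and $1-\tfrac{1}{p^2-1}+\tfrac{1}{p^2(p^2-1)}=\tfrac{p^2-1}{p^2}$ are exactly the collapse the paper performs), and then bound the remainder by unfolding the weight $h(b)=\mu^2(b)\prod_{p\mid b}\tfrac{p^2}{p^2-2}$ via Lemma~\ref{conv}, removing the coprimality with $r$ by M\"obius, and applying Lemma~\ref{exp} in the range $Y'>q^{\epsilon}$ together with the trivial bound \eqref{el.bd} elsewhere. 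The only organisational difference is that you package the divisibility conditions coming from $\kappa((\ell,r^2))$ and from $\prod_{p^2\mid\ell,\,p\nmid r}$ into a single multiplicative weight $\beta_r(D)$ supported on $D=m_1m_2^2b^2$, whereas the paper keeps the variables $\rho,\sigma$ (from M\"obius-inverting the gcd with $r^2$) and $d$ (from expanding the product over $p^2\mid\ell$) separate; after specialising $D$ your $b$ is the paper's $d$ and your $m_1,m_2$ play the role of the paper's $\rho,\sigma,\tau$.

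Two small remarks. First, your reduction sentence ``the case $r<0$ is the same after replacing $r$ by $|r|$, since $\ldots |I(X,\ell,r)|$ depend on $r$ only through $|r|$'' is not literally true: $|I(X,\ell,r)|$ does depend on the sign of $r$ (the support in $\ell$ is $(-rX,X)$ versus $((1-r)X,X)$ etc.). This is harmless—the paper simply treats one sign and remarks that the other requires only minor modifications—but the justification you give should be dropped. Second, in the tail estimate your displayed bound $q^{1/2+\epsilon}X^{\epsilon-1/2}$ is not the sharpest accounting; the paper's split (parameters $\le q^{\epsilon/2}$ versus $>q^{\epsilon/2}$, combined with \eqref{conv2}--\eqref{conv3}) gives directly $\ll_{\epsilon,r} q^{\epsilon}$ for the small-$Y'$ contribution before multiplying by $q$, hence $q^{1+\epsilon}$ afterward, with no appeal to $q\le X$. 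Your final bound is nonetheless of the right shape after renaming $\epsilon$.
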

The special case $r=1$ simplifies many of the calculations in the proof below. For instance, the sums over $\rho$, $\sigma$ and $\tau$ disappear. Although, this simpler result is, in fact, equally deep and it shows more clearly the connection between the upper bound \eqref{eq-exp} and the error term in \eqref{propsum-eq} 
\begin{proof}
We start by recalling \eqref{fq}

$$
f(\ell,r)=C_2\prod_{p\mid r}\left(\frac{p^2-1}{p^2-2}\right)\prod_{\substack{p^2\mid \ell\\p\nmid r}}\left(\frac{p^2-1}{p^2-2}\right)\kappa((\ell,r^2)),
$$
where $C_2$ is as in \eqref{C2}. We notice that the first and second terms on the right-hand side of equation above are independent of $\ell$, that means that in order to evaluate $\mathfrak{S}[\gamma](X,q)$, we need to study

$$
\mathfrak{S}'[\gamma](X,q)=\sum_{\ell\equiv s\!\!\!\pmod q}|I(X,\ell,r)|\prod_{\substack{p^2\mid \ell\\p\nmid r}}\left(\frac{p^2-1}{p^2-2}\right)\kappa((\ell,r^2)).
$$
i.e.
\begin{equation}\label{S',S}
\mathfrak{S}'[\gamma](X,q)=C_2^{-1}\prod_{p\mid r}\left(\frac{p^2-1}{p^2-2}\right)^{-1}\mathfrak{S}[\gamma](X,q).
\end{equation}

We expand the product $\displaystyle\prod_{\substack{p^2\mid \ell\\p\nmid r}}\left(\frac{p^2-1}{p^2-2}\right)$ as
$$
\prod_{\substack{p^2\mid \ell\\p\nmid r}}\left(\frac{p^2-1}{p^2-2}\right)=\sum_{\substack{d^2\mid \ell\\(d,r)=1}}\dfrac{h(d)}{d^2},
$$
from which we deduce

\begin{align}\label{propsum-first}
\mathfrak{S}'[\gamma](X,q)&:=\sum_{\rho\mid r^2}\kappa(\rho)\sum_{\substack{\ell\equiv s\!\!\!\!\!\pmod{q}\\(\ell,r^2)=\rho}}|I(X,\ell,r)|\sum_{\substack{d^2\mid \ell\\(d,r)=1}}\dfrac{h(d)}{d^2}\notag\\
&=\sum_{\rho\sigma\mid r^2}\kappa(\rho)\mu(\sigma)\sum_{\ell_0\equiv {\overline{\rho\sigma}}s\!\!\!\!\!\pmod{q}}|I\left(X,\rho\sigma\ell_0,r\right)|\sum_{\substack{d^2\mid \ell_0\\(d,r)=1}}\dfrac{h(d)}{d^2}\notag\\
&=\sum_{\rho\sigma\mid r^2}\kappa(\rho)\mu(\sigma)\sum_{(d,qr)=1}\dfrac{h(d)}{d^2}\sum_{\ell_1\equiv  \overline{(\rho\sigma d^2)}s\!\!\!\!\!\pmod{q}}\left|I(X,\rho\sigma d^2\ell_1,r)\right|
\end{align}
where in the second line we used M\"obius inversion formula for detecting the gcd condition and we noticed that the congruence satisfied by $\ell_0$ implies $(d,q)=1$.

We write the inner sum as an integral:

\begin{equation}\label{Y->int}
\sum_{\ell_1\equiv  \overline{(\rho\sigma d^2)}s\!\!\!\!\!\pmod{q}}\left|I(X,\rho\sigma d^2\ell_1,r)\right|=\int_{0}^{X}\sum_{\ell_1\equiv  \overline{(\rho\sigma d^2)}s\!\!\!\!\!\pmod{q}}\mathbf{1}_{(0,X)}(ru+\rho\sigma d^2\ell_1)du,\\
\end{equation}
where $\mathbf{1}_{(0,X)}$ is the characteristic function of the interval $(0,X)$. Hence the inner sum above equals

\begin{multline*}
\left\lfloor\dfrac{X-ru}{\rho\sigma d^2q}-\frac{\overline{(\rho\sigma d^2)}s}{q}\right\rfloor-\left\lfloor\dfrac{-ru}{\rho\sigma d^2q}-\frac{\overline{(\rho\sigma d^2)}s}{q}\right\rfloor\\
= \frac{X}{\rho\sigma d^2q}-B_1\left(\dfrac{X-ru}{\rho\sigma d^2q}-\frac{\overline{(\rho\sigma d^2)}s}{q}\right)+B_1\left(\dfrac{-ru}{\rho\sigma d^2q}-\frac{\overline{(\rho\sigma d^2)}s}{q}\right),
\end{multline*}
for almost all $u\in (0,X)$ in the sense of Lebesgue measure.\\
If we use this formula  in equation \eqref{Y->int}, we deduce the equality

\begin{multline}\label{B2s}
\sum_{\ell_1\equiv  \overline{(\rho\sigma d^2)}s\!\!\!\!\!\pmod{q}}\left|I(X,\rho\sigma d^2\ell_1,r)\right|=\\
\frac{X^2}{\rho\sigma d^2q}-\frac{\rho\sigma d^2q}{r}\left\{B_2\left(\dfrac{X^2}{\rho\sigma d^2q}-\frac{\overline{(\rho\sigma d^2)}s}{q}\right)-B_2\left(-\frac{\overline{(\rho\sigma d^2)}s}{q}\right)\right.\\
\left.-B_2\left(\dfrac{(1-r)X}{\rho\sigma d^2q}-\frac{\overline{(\rho\sigma d^2)}s}{q}\right)+B_2\left(\frac{-rX}{\rho\sigma d^2q}-\frac{\overline{(\rho\sigma d^2)}s}{q}\right)\right\}.
\end{multline}
From this point on, we suppose $r<0$. The case $r>0$ requires only minor modifications. With this hypothesis, we have that both
$$
\frac{(1-r)X}{\rho\sigma d^2q}\text{ and }\dfrac{-rX}{\rho\sigma d^2q}
$$
are positive for every $\rho,\sigma\geq 1$.

We inject \eqref{B2s} above in equation \eqref{propsum-first} and we define
$$
B(D;q,a;r):=\sum_{(d,qr)=1}h(d)\Delta_D(d,q;a),
$$
where $\Delta_D(d,q;a)$ is as in \eqref{Delta}. From \eqref{propsum-first} and \eqref{B2s} we deduce the equality

\begin{multline}\label{frak'=lambda+3G}
\mathfrak{S}'[\gamma](X,q)=\lambda(q,r)\frac{X^2}{q}-\frac{q}{r}\left\{ G\left(\frac{X}{q};q,-s;r\right)\right.\\
\left.-G\left(\frac{(1-r)X}{q};q,-s;r\right)+G\left(\frac{-rX}{q};q,-s;r\right)\right\},
\end{multline}
where
\begin{equation*}
G(Y;q,s;r)=\underset{\rho\sigma\mid r^2}{\sum\sum}\kappa(\rho)\mu(\sigma)\rho\sigma B\left(\sqrt{\frac{Y}{\rho\sigma}},q,\overline{\rho\sigma}s;r\right),
\end{equation*}
and
\begin{equation*}
\lambda(q,r)=\sum_{\rho\sigma\mid r^2}\frac{\kappa(\rho)\mu(\sigma)}{\rho\sigma} \times \sum_{(d,qr)=1}\dfrac{h(d)}{d^4}.
\end{equation*}
Returning to the function $\beta(m)$ defined in Lemma \ref{conv}, we observe that for a general $D>0$, one has

\begin{align*}
B\left(D;q,a;r\right)&=\sum_{(m,qr)=1}\beta(m)\sum_{(n,qr)=1}\Delta_{D}(mn;q,a) \\
&=\sum_{(m,qr)=1}\beta(m)\sum_{(n,qr)=1}\Delta_{D/m}(n;q,\overline{m}^2a)\\
&=\sum_{(m,qr)=1}\beta(m)\sum_{\tau\mid r}\mu(\tau)\sum_{(n,q)=1}\Delta_{D/\tau m}(n;q,{\overline{\tau}}^2{\overline{m}}^2a)\\
&=\sum_{(m,qr)=1}\beta(m)\sum_{\tau\mid r}\mu(\tau)A(D/\tau m,q;{\overline{\tau}}^2{\overline{m}}^2a).
\end{align*}

We apply the equality above with  $D=\sqrt{\frac{Y}{\rho\sigma}}$ and $a=\overline{\rho\sigma}s$, multiply by $\kappa(\rho)\mu(\sigma)\rho\sigma$ and sum over $\rho,\sigma$ such that $\rho\sigma\mid r^2$, we have

\begin{equation}\label{quad}
G(Y;q,s;r)=\underset{\rho\sigma\mid r^2}{\sum\sum}\sum_{\tau\mid r}\sum_{(m,qr)=1}\kappa(\rho)\mu(\sigma)\mu(\tau)\rho\sigma\beta(m)A\left({\sqrt{\frac{Y}{\rho\sigma\tau^2 m^2}}};q,\overline{\rho\sigma\tau^2 m^2}s\right).
\end{equation}
Our discussion depends on the size of $Y$.

- If $Y\leq q^{\epsilon}$, we have the trivial bound (see \eqref{el.bd})
$$
A\left({\sqrt{\frac{Y}{\rho\sigma\tau^2 m^2}}};q,\overline{\rho\sigma\tau^2 m^2}s\right)\ll {\sqrt{\frac{Y}{\rho\sigma\tau^2 m^2}}}\leq \frac{Y^{1/2}}{m},
$$
for every $\rho, \sigma, \tau\geq 1$. Summing over $\rho, \sigma, \tau$ and $m$, it gives

\begin{align}\label{Ysmall}
G(Y;q,s;r)&\ll_r Y^{1/2}\sum_{m\geq 1}\frac{\beta(m)}{m}\notag\\
&\ll q^{\epsilon/2},
\end{align}
as a consequence of upper bound \eqref{conv2}.

- If $Y>q^{\epsilon}$, we separate the quadruple sum on the right-hand side of \eqref{quad} as

$$
\underset{\substack{m\leq q^{\epsilon/2}\\ \rho\sigma\tau^2m^2> Y/q^{\epsilon}}}{\sum\sum\sum\sum}+\underset{\substack{m\leq q^{\epsilon/2}\\ \rho\sigma\tau^2m^2\leq Y/q^{\epsilon}}}{\sum\sum\sum\sum}+\underset{m>q^{\epsilon/2}}{\sum\sum\sum\sum}.
$$
For the first sum we have, again, the trivial bound

\begin{equation}\label{primeira}
A\left({\sqrt{\frac{Y}{\rho\sigma\tau^2 m^2}}};q,\overline{\rho\sigma\tau^2 m^2}s\right)\ll {\sqrt{\frac{Y}{\rho\sigma\tau^2 m^2}}}\leq q^{\epsilon/2},
\end{equation}
The most delicate sum is the second one, since we appeal to \eqref{eq-exp}. This gives

\begin{equation}\label{segunda}
A\left({\sqrt{\frac{Y}{\rho\sigma\tau^2 m^2}}};q,\overline{\rho\sigma\tau^2 m^2}s\right)\ll_{\epsilon} {\sqrt{\frac{Y}{\rho\sigma\tau^2 m^2}}}(\log q)^{-\delta}.
\end{equation}
For the third one, we use the trivial bound,

\begin{equation}\label{terceira}
A\left({\sqrt{\frac{Y}{\rho\sigma\tau^2 m^2}}};q,\overline{\rho\sigma\tau^2 m^2}s\right)\ll {\sqrt{\frac{Y}{\rho\sigma\tau^2 m^2}}},
\end{equation}
Gathering the inequalities \eqref{primeira}, \eqref{segunda} and \eqref{terceira} in \eqref{quad}, we obtain

$$
G(Y;q,s;r)\ll_{\epsilon,r} q^{\epsilon/2}\sum_{m\leq q^{\epsilon/2}}|\beta(m)|+\sqrt{Y}(\log q)^{-\delta}\sum_{m\leq q^{\epsilon/2}}\frac{|\beta(m)|}{m}+\sqrt{Y}\sum_{m>q^{\epsilon/2}}\frac{|\beta(m)|}{m},
$$
and finally, by Lemma \ref{conv}

\begin{equation}\label{Ybig}
G(Y;q,s;r)\ll_{\epsilon,r} q^{\epsilon} + \sqrt{Y}(\log q)^{-\delta}\text{\; \; \;}(Y>q^{\epsilon}).
\end{equation}
Comparing with \eqref{Ysmall}, we have that \eqref{Ybig} is true for any $Y\geq 1$.

Combining \eqref{Ybig} and \eqref{frak'=lambda+3G}, one has
\begin{equation}\label{almost-frakS}
\mathfrak{S}'[\gamma](X,q)=\lambda(q,r)\frac{X^2}{q}+O_{\epsilon,r}(q^{1+\epsilon} + X^{1/2}q^{1/2}(\log q)^{-\delta}).
\end{equation}
If we multiply the formula above by $C_2\displaystyle\prod_{p\mid r}\left(\frac{p^2-1}{p^2-2}\right)$ (recall formula \eqref{S',S}), we deduce

\begin{equation}\label{end}
\mathfrak{S}[\gamma](X,q)=\Lambda(q,r)\frac{X^2}{q}+O_{\epsilon,r}(q^{1+\epsilon} + X^{1/2}q^{1/2}(\log q)^{-\delta}),
\end{equation}
where

$$
\Lambda(q,r)=C_2\prod_{p\mid r}\left(\frac{p^2-1}{p^2-2}\right)\sum_{\rho\sigma\mid r^2}\frac{\kappa(\rho)\mu(\sigma)}{\rho\sigma} \times \sum_{(d,qr)=1}\dfrac{h(d)}{d^4}
$$
Since for $r$ squarefree, we have the equality
$$
\sum_{\rho\sigma\mid r^2}\frac{\kappa(\rho)\mu(\sigma)}{\rho\sigma}=\prod_{p\mid r}\left(\dfrac{p^2-1}{p^2}\right),
$$
then, by some standard calculations, we notice that $\Lambda(q,r)$ does not depend on $r$. More precisely, since, $q$ is prime and $(q,r)=1$, we have

$$
\Lambda(q,r)=\left(\dfrac{6}{\pi^2}\right)^2\left(1+\dfrac{1}{q^2(q^2-2)}\right)^{-1}.
$$
As a consequence, formula \eqref{end} completes the proof of Proposition \ref{propsum}. 
\end{proof}

\section{Study of $S[\gamma](X,q)$}

We rewrite $S[\gamma](X,q)$ (see \eqref{def-S}) as
\begin{equation}\label{S=nh}
S[\gamma](X,q)=\sum_{\substack{\ell\equiv s\!\!\!\!\!\!\pmod q}}\sum_{n\in I(X,\ell,r)}\mu^2(n)\mu^2(rn+\ell).
\end{equation}
First we notice that the inner sum equals zero if $|\ell|>2|r|X$. Hence, by formula \eqref{S(l,r)=}, we have that

\begin{equation}\label{eq-ts}
S[\gamma](X,q)=\sum_{\substack{\ell\equiv s\!\!\!\pmod q\\|\ell|\leq 2|r|X}}f(\ell,r)|I(X,\ell,r)|+O_r\left(\dfrac{X}{q}X^{2/3+\epsilon}\right),
\end{equation}
for $X\geq q$. We notice that if $|\ell|>2|r|X$, one also has $|I(X,\ell,r)|=0$, hence we can complete the sum on the right-hand side of \eqref{eq-ts}. Thus, we can write (recall definition \eqref{frakS})

$$
S[\gamma](X,q)=\mathfrak{S}[\gamma](X,q)+O_r\left(\dfrac{X^{5/3+\epsilon}}{q}\right).
$$
From Proposition \ref{propsum}, we deduce the equality

$$
S[\gamma](X,q)=\left(\dfrac{6}{\pi^2}\right)^2\left(1+\dfrac{1}{q^2(q^2-2)}\right)^{-1}\dfrac{X^2}{q}+O_{\epsilon,r}\left(q^{1+\epsilon}+X^{1/2}q^{1/2}(\log q)^{-\delta}+\dfrac{X^{5/3+\epsilon}}{q}\right).
$$
In view of the definition \ref{cq} of $C(q)$, it is easy to see that
\begin{equation*}
\left(\dfrac{6}{\pi^2}\right)^2\left(1+\dfrac{1}{q^2(q^2-2)}\right)^{-1}=C(q)^2+O\left(\frac{1}{q^2}\right).
\end{equation*}
In conclusion, we proved
\begin{prop}\label{Sfin}
There exists $\delta>0$ such that for every $\epsilon>0$ and every $r\neq 0$, one has the asymptotic formula
\begin{equation}\label{S-final}
S[\gamma_{r,s}](X,q)=C(q)^2\dfrac{X^2}{q}+O_{\epsilon,r}\left(q^{1+\epsilon}+X^{1/2}q^{1/2}(\log q)^{-\delta}+\dfrac{X^{5/3+\epsilon}}{q}+\dfrac{X^2}{q^3}\right),
\end{equation}
uniformly for $X\geq 2$, for every integer $s$ and for any prime $q$ such that $q\nmid rs$ and $q\leq X$.
\end{prop}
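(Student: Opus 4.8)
The statement is a repackaging of Proposition~\ref{propsum} together with Lemma~\ref{alatsang}, so the plan is to reduce $S[\gamma](X,q)$ to the arithmetic sum $\mathfrak{S}[\gamma](X,q)$ and keep track of the loss. Following the convention of the introduction, I carry out the argument for $r$ squarefree. First I rewrite the double sum \eqref{def-S} by grouping according to the value $\ell:=n_2-rn_1$, which runs over the residue class $s\pmod q$: the inner sum over $n_1=n$ is then precisely the quantity $S(\ell,r)$ of \eqref{Slr}, and it vanishes when $|\ell|>2|r|X$ (since then $I(X,\ell,r)=\emptyset$), so only $O_r(1+X/q)$ values of $\ell$ occur. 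For each such $\ell$ I apply Lemma~\ref{alatsang}, which gives $S(\ell,r)=f(\ell,r)|I(X,\ell,r)|+O_r\!\bigl(d_3(\ell)X^{2/3}(\log 2X)^{7/3}\bigr)$; Lemma~\ref{alatsang} is phrased for $r>0$, and the case $r<0$ follows by minor modifications, the two endpoints of the interval playing symmetric roles.

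Next I treat the two resulting pieces. Summing the error term over $\ell\equiv s\pmod q$ with $|\ell|\le 2|r|X$ costs $\sum_{\ell}d_3(\ell)(\log 2X)^{7/3}\ll_{\epsilon,r}(1+X/q)X^{\epsilon/2}$ (using the trivial bound $d_3(\ell)\ll_{\epsilon}X^{\epsilon/4}$ on the relevant range), so the total contribution of the error terms is $O_{\epsilon,r}\bigl(X^{5/3+\epsilon}/q\bigr)$ once one uses $q\le X$; this step requires $X\ge q$, which holds by hypothesis. For the main term, since $|I(X,\ell,r)|=0$ whenever $|\ell|>2|r|X$, I may complete the sum over $\ell$ and recognize it as $\mathfrak{S}[\gamma](X,q)$ from Definition~\ref{fraksgamma}. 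Proposition~\ref{propsum} then yields
\[
\mathfrak{S}[\gamma](X,q)=\Bigl(\tfrac{6}{\pi^2}\Bigr)^{2}\Bigl(1+\tfrac{1}{q^{2}(q^{2}-2)}\Bigr)^{-1}\frac{X^{2}}{q}+O_{\epsilon,r}\bigl(q^{1+\epsilon}+X^{1/2}q^{1/2}(\log q)^{-\delta}\bigr).
\]

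Finally I replace the leading coefficient by $C(q)^2$: from \eqref{cq} one has $\bigl(\tfrac{6}{\pi^2}\bigr)^{2}\bigl(1+\tfrac{1}{q^{2}(q^{2}-2)}\bigr)^{-1}=C(q)^{2}+O(q^{-2})$, and the discrepancy $O(q^{-2})$, multiplied by $X^2/q$, is absorbed into the new error term $X^2/q^3$. Adding up the four error terms gives \eqref{S-final}, with $\delta>0$ inherited from Proposition~\ref{propsum}. I do not anticipate a genuine obstacle at this stage: all the analytic depth---Bourgain's estimate via Corollary~\ref{Cor-Bourgain} and Lemma~\ref{exp}, and the delicate multi-variable summation carried out in Proposition~\ref{propsum}---has already been spent. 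The only points that need care are bookkeeping: the $r<0$ case, the verification that the $d_3(\ell)$ sum along the progression really produces the stated $X^{5/3+\epsilon}/q$ and nothing larger, and checking that every implied constant depends only on $\epsilon$ and $r$, so that the bound is uniform in the integer $s$ and in primes $q\le X$.
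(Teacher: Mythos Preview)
Your proof is correct and follows the paper's argument essentially verbatim: group by $\ell\equiv s\pmod q$, apply Lemma~\ref{alatsang} termwise to recognize the main term as $\mathfrak{S}[\gamma](X,q)$, invoke Proposition~\ref{propsum}, and then convert the leading constant to $C(q)^2$ at the cost of $O(X^2/q^3)$. You are in fact slightly more explicit than the paper about bounding the $d_3(\ell)$-sum along the progression and about the $r<0$ case, but the approach is identical.
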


\section{Proof of the main Theorem}

We start by recalling the formula \eqref{developC}
$$
C[\gamma](X,q)=S[\gamma](X,q)-2C(q)\frac{X}{q}\sum_{n\leq X}\mu^2(n)+C(q)^2\dfrac{X^2}{q}+O\left(\frac{X^2}{q^2}\right).
$$
By Proposition \ref{Sfin} and formula \eqref{SF-easy}, we directly obtain the equality

\begin{equation*}
C[\gamma](X,q)=O_{\epsilon,r}\left(q^{1+\epsilon}+X^{1/2}q^{1/2}(\log q)^{-\delta}+\dfrac{X^{5/3+\epsilon}}{q}+\dfrac{X^2}{q^2}\right).
\end{equation*}
The proof of Theorem \ref{main-aa+1} is now complete.

\end{document}